\let\emph\relax
\DeclareTextFontCommand{\emph}{\itshape}
\newtheoremstyle{plainsl}%
        {\topsep}
        {\topsep}
        {\slshape} 
        {}
        {\normalfont\bfseries}
        {.}
        { }
        {}
\theoremstyle{plainsl}
\newtheorem{theorem}{Theorem}[section]
\newtheorem{proposition}[theorem]{Proposition}
\newtheorem{lemma}[theorem]{Lemma}
\newtheorem{corollary}[theorem]{Corollary}
\newtheorem{definition}[theorem]{Definition}
\newtheorem{example}[theorem]{Example}
\newtheorem{problem}[theorem]{Problem}
\newtheorem{remark}[theorem]{Remark}
\numberwithin{equation}{section}
\newcommand{\PropM}[1]{\Phi_{#1}}
\newcommand{\re}{\mathbb{R}}
\newcommand{\cx}{\mathbb{C}}
\newcommand{\ints}{\mathbb{Z}}
\newcommand{\rats}{\mathbb{Q}}
\newcommand{\EE}{\mathbb{E}}
\newcommand{\FF}{\mathbb{F}}
\newcommand{\KK}{\mathbb{K}}
\newcommand{\BMA}{\mathbb{A}}
\newcommand{\BMB}{\mathbb{B}}
\newcommand{\cA}{\mathcal{A}}
\newcommand{\cB}{\mathcal{B}}
\newcommand{\cC}{\mathcal{C}}
\newcommand{\cE}{{\mathcal E}}  
\newcommand{\cO}{{\mathcal O}}
\newcommand{\cQ}{{\mathcal Q}}
\newcommand{\cR}{{\mathcal R}}
\newcommand{\cS}{{\mathcal S}}
\newcommand{\sT}{{\mathsf{T}}}
\newcommand{\be}{{\mathbf e}}
\newcommand{\bx}{{\mathbf x}}
\newcommand{\ones}{{\mathds{1}}}  
\newcommand{\Mat}{\mathsf{Mat}}
\newcommand{\tr}{\mathsf{tr}}
\newcommand{\mmod}{ \  {\scriptscriptstyle{\rm{MOD}}} \  }
\DeclareMathOperator\Sym{Sym}
\DeclareMathOperator\rk{rk}
\DeclareMathOperator\spn{span}
\DeclareMathOperator\Gal{\mathsf{Gal}}
\DeclareMathOperator\GL{\mathsf{GL}}
\DeclareMathOperator\Dic{\mathsf{Dic}}
\begin{document}
\thispagestyle{empty}
\setcounter{page}{1}
\title{Rational Delsarte designs and Galois fusions of association schemes}
\author{
Jesse Lansdown \\
School of Mathematics and Statistics \\
University of Canterbury, New Zealand\\
{\tt jesse.lansdown@canterbury.ac.nz} \\
William J.~Martin \\
Department of Mathematical Sciences \\
Worcester Polytechnic Institute, 
Worcester, MA USA \\
{\tt  martin@wpi.edu}}

\date{\today} 
\maketitle

\medskip

\begin{abstract}
Delsarte theory, more specifically the study of codes and designs in association schemes, has proved invaluable in studying an increasing assortment of association schemes in recent years. Tools motivated by the study of error-correcting codes in the Hamming scheme and combinatorial $t$-designs in the Johnson scheme apply equally well in association schemes with irrational eigenvalues.   We assume here that we have a commutative association scheme with irrational eigenvalues and wish to study its Delsarte $T$-designs. We explore when a $T$-design is also a $T'$-design where $T'\supseteq T$ is controlled by the orbits of a Galois group related to the splitting field of the association scheme.  We then study Delsarte designs in the association schemes of finite groups, with a detailed exploration of the dicyclic groups.
\end{abstract}

\noindent {\bf Keywords:} Association scheme, Delsarte design, Galois group, fusion scheme, conjugacy class scheme.

\noindent {\bf 2020 MSC Subject Codes:} 05E30, 05B30,  20C15, 16S50.

%
%
\section{Introduction}
\label{Sec:intro}

The thesis of Philippe Delsarte \cite{del} was a landmark in coding theory and combinatorial 
design theory.  Employing the language of  association schemes, Delsarte's approach was 
one of the first to cast combinatorial questions in a linear algebraic framework where tools
from matrix theory, orthogonal polynomials, and optimisation could be applied. In particular, 
Delsarte \cite{del} characterised orthogonal arrays and block designs as 01-vectors orthogonal 
to specific eigenspaces of the Hamming scheme and Johnson scheme, respectively. It is 
natural, then, to look at analogous substructures in the $q$-analogues of these families of 
association schemes and other classical families of $Q$-polynomial distance-regular graphs \cite{bcn}. 
The following decades brought more examples, more applications, and more connections to 
other parts of mathematics.  

Association schemes are ubiquitous in combinatorics. For many problems the vertex set of an 
association scheme is the natural ``ambient space'' in which to search for subsets with desired structure.
Schurian association schemes are fundamental to the study of group actions. Dual polar spaces and sesquilinear
forms schemes play key roles in finite geometry. And, of course, various extremal codes and designs themselves 
form interesting association schemes.

The generic definition of a Delsarte design in an association scheme --- a subset or weighted subset of the vertex set whose characteristic  vector is orthogonal to some specified set of eigenspaces of (the Bose-Mesner algebra of) that scheme --- is again and again found to coincide with interesting and important combinatorial substructures. In \cite{martinsagan,ernstschmidt},  orthogonality to natural collections of eigenspaces translates exactly to transitivity properties of a subset of a symmetric group or finite general linear group.  In \cite{synch}, Delsarte designs play a key role in the study of synchronisation. In recent years, we have seen many  Erd\H{o}s-Ko-Rado results \cite{EKRbook} for various families of association schemes and the extremal subsets in most cases are Delsarte designs. 

It is crucial, therefore, to further develop Delsarte's theory in the case of general (commutative) association schemes and beyond. 
While the Hamming and Johnson schemes, etc., have only integer eigenvalues, most association schemes have irrational eigenvalues. The most obvious situation where restriction to a subfield of the splitting field is necessary is that in which one applies Delsarte's linear programming bound \cite[Sec.~3.2]{del} to problems in non-symmetric association schemes: here the splitting field is not contained in $\re$ so one moves to the symmetrisation to obtain a scheme with only real eigenvalues. But one can go further: since codes and designs, even weighted versions with rational weights, are represented by vectors with rational entries, we benefit by moving to rational vector spaces of matrices and, in the best scenario, work instead with a subalgebra of the Bose-Mesner
which itself is a Bose-Mesner algebra having only rational eigenvalues.

To this end, we explore the set of minimal rational idempotents in a Bose-Mesner algebra and their relation to the Galois group of the splitting field of the association scheme.  In this paper, all association schemes are commutative; so a Bose-Mesner algebra of dimension $d+1$  contains exactly $2^{d+1}$ matrix idempotents. The complex vector space spanned by any subset of these summing to $I$  is closed under both the conjugate  transpose operation and matrix multiplication.  (We will assume that our collection always includes a scalar  multiple of the all ones matrix, $J$.) A \emph{fusion scheme} arises when such a vector space is also closed under the Schur (entrywise) product. We are particularly interested in which subfields $\KK$ of the splitting field $\FF$  have the property that the matrix idempotents having entries in $\KK$ produce a fusion scheme in this way. 

The most important case for Delsarte theory, of course, is $\KK=\rats$. The matrix idempotents with rational entries are obtained by  summing (or \emph{merging}) primitive idempotents over orbits of the full Galois group $\Gal(\FF/\rats)$. These idempotents are fundamental to our understanding of $T$-designs in association schemes with irrational eigenvalues. Our main tool is based on a simple observation. Suppose $(X,\cR)$ is a $d$-class association scheme with basis $\{E_0,\ldots,E_d\}$ of primitive idempotents over its splitting field $\FF$. If a subset $C\subset X$  is a $T$-design for some subset $T$ of $\{1,\ldots,d\}$ with $j\in T$ and $E_j^{\bm{\sigma}}=E_\ell$ for $\bm{\sigma} \in \Gal(\FF/\rats)$, then $C$ is also a $\left(T \cup \{\ell\}\right)$-design. 

This leads us to study the span of all idempotent matrices in our Bose-Mesner algebra having all entries in a specified subfield $\KK$ of the splitting field $\FF$. We show, in Theorem \ref{Thm:rational}, how this can greatly reduce the size of a search space or optimisation problem. When the process of merging idempotents over orbits of $\Gal(\FF/\KK)$ results in another association scheme (see Lemma \ref{Lem:EsubK}(v) and Lemma \ref{Lem:GaloisMergings}), the Delsarte $T$-designs in the original scheme are precisely the Delsarte $T'$-design in this fusion scheme for a particular $T'$ depending on 
$T$ (Theorem \ref{Thm:inclusion}) and this has powerful implications for the collections of $T$-designs in two association schemes which happen to have isomorphic rational fusions (Theorem \ref{Thm:BijectiveDesignCollections}).

An important family of association schemes to consider are the conjugacy class schemes (or ``group schemes'') of finite groups.  These have the property that a fusion scheme is obtained by merging conjugacy classes in a way that respects the Galois group (over $\rats$). We provide a proof of this result along with some basic background material for the sake of exposition, including a natural  method for obtaining a basis of eigenvectors for the Bose-Mesner algebra.  

We finish with an illustration of how these three themes --- Delsarte theory, Galois groups, and conjugacy class schemes ---  come together. In the final section of the paper, we apply the tools of Sections  \ref{Sec:fusion}  and \ref{Sec:irrational} to look at the inner distributions and MacWilliams transforms of all subgroups of the dicyclic groups $\Dic_n$ ($n$ odd).

\section{Association schemes, Bose-Mesner algebras, and matrices over subfields} 
\label{Sec:background}

Our notation and terminology generally follow \cite{bcn} but see also \cite{banito,godsil}.
In this paper, all association schemes are commutative. An \emph{association scheme} (or, more simply, \emph{scheme}) is an ordered pair $\mathscr{X}=(X,\cR)$ where
$X$ is a nonempty finite set and $\cR=\{R_0,\ldots, R_d\}$ is a partition of $X\times X$ satisfying
\begin{itemize}
\item[(i)] $R_0= \mathrm{id}_X$, the identity relation;
\item[(ii)] for each $i$, $0\le i\le d$, there is an $i'$ for which $R_{i'}=R_i^\top = \{ (b,a) \mid (a,b)\in R_i\}$;
\item[(iii)] there exist $p_{ij}^k$ ($0\le i,j,k\le d$) such that $\left| \{ c\in X \mid (a,c)\in R_i, \ (c,b)\in R_j\} \right| =p_{ij}^k$ for any $(a,b)\in R_k$;
\item[(iv)]  $p_{ij}^k=p_{ji}^k$ for all $i$, $j$ and $k$.
\end{itemize}

We may view the pair $(X,R_i)$ as a digraph on vertex set $X$.
We write $R_i(x) = \{ y\in X | (x,y) \in R_i\}$. Condition (iii) is then $\left| R_i(a) \cap R_{j'}(b) \right| = p_{ij}^k$ whenever $b\in R_k(a)$.  We typically deal with these directed graphs via their adjacency matrices.

For a nonempty finite set $X$ and a field $\KK$, we denote by  $\Mat_X(\KK)$  the algebra of matrices with rows and columns indexed by the elements of $X$ having entries in $\KK$. For $x,y\in X$ and $A \in \Mat_X(\KK)$, the entry in row $x$, column $y$ of $A$ is denoted $A_{xy}$.  
For $[m]=\{1,\ldots,m\}$, we will simply write $\Mat_m(\KK)$ for $\Mat_{[m]}(\KK)$. For an association scheme $\mathscr{X}=(X,\cR)$ as above, the \emph{$i^{\rm th}$ adjacency matrix} or  \emph{$i^{\rm th}$ Schur idempotent}\footnote{When we use the term \emph{idempotent} without the modifier ``Schur'', we always mean a matrix $E$ such that $E^2=E$. A Schur idempotent is simply a matrix with all entries zero or one and is idempotent with respect to Schur, or entrywise, multiplication of matrices.} $A_i$ is the matrix in $\Mat_X(\rats)$ with
 $(a,b)$-entry equal to one if $(a,b)\in R_i$ and a zero otherwise. The fact that the $R_i$ partition $X\times X$ means $\sum_i A_i =J$, the all ones matrix,   $A_i\circ A_j = 0$ for $i\neq j$, and $A_i\circ A_i =A_i \neq 0$.   Conditions (i)--(iv) are then rephrased as 
\begin{itemize}
\item[(i')] $A_0= I$;
\item[(ii')] for each $i$, $A_i^\top \in \{A_0,\ldots,A_d\}$;
\item[(iii')] $A_i A_j = \sum_{k=0}^d p_{ij}^k A_k$;
\item[(iv')]  $A_i A_j = A_j A_i$.
\end{itemize}

We use $\cR=\{R_0,\ldots,R_d\}$ and $\cA=\{A_0,\ldots,A_d\}$ interchangeably.
Note that,  in this paper, a \emph{Bose-Mesner algebra} $\BMA= \cx[\cA] = \spn_\cx\{A_0,\ldots,A_d\}$ is always defined over $\cx$, by extension of scalars, if necessary.

The commuting normal matrices $A_0,\ldots,A_d$ are simultaneously diagonalisable over $\cx$ and we have a second canonical \emph{basis of primitive idempotents} $\{ E_0,E_1,\ldots,E_d\}$ for $\BMA$ satisfying $E_i E_j = \delta_{ij} E_i$ and $\sum_j E_j = I$ (see, e.g., \cite[Lem.~2.18]{BBIT}). By convention, $E_0=\frac{1}{|X|} J$. For $0\le j\le d$, there is a $j^*$ such that $E_{j^*} = E_j^\dagger$, the Hermitian transpose of $E_j$. The \emph{splitting field} of $\mathscr{X}$ (or of $\BMA$) is the subfield of $\cx$ obtained from $\rats$ by adjoining all eigenvalues of $A_1,\ldots,A_d$.

For a field $\KK\subseteq \cx$ and a set of matrices $\cE \subset \Mat_X(\KK)$, we will write $\KK[\cE]= \spn_\KK(\cE)$ the $\KK$-\emph{span} of  $\cE$.  
Assuming $\cA$ is the basis of Schur idempotents of some association scheme, $\KK[\cA]$ is always closed and commutative under both ordinary and Schur multiplication. Under these conditions, $\KK[\cA]$ is also closed under taking transpose and contains both $I$ and $J$. While the set $\KK[\cA]$ is diagonalisable over $\mathbb{C}$, it is not diagonalisable over $\KK$ in general. Among the $2^{d+1}$  idempotents in $\cx[\cA]$, we consider those with entries in subfield $\KK$: 
\begin{equation}
\label{Eqn:EK}
\cE_\KK = \left\{ F=\sum_{j=0}^d c_j E_j \middle| c_0,\ldots,c_d\in \{0,1\}, \ F\in \Mat_X(\KK) \right\}.
\end{equation}

\begin{example}
\label{Ex:Z12}
    Consider the conjugacy class scheme of $\ints_{12}$ with splitting field $\rats(\sqrt{3},i)$. Let $\zeta=e^{\pi i/6}$ and index the primitive idempotents $E_0,\ldots,E_{11}$ so that 
    $E_j = \frac{1}{12} u_j u_j^\dagger$ where $\left( u_j \right)_\ell = \zeta^{j\ell}$. For $\KK=\rats(\sqrt{-3})$, 
    $\cE_\KK = \left\{ \sum_{j=0}^{11} c_j E_j \middle| c_j \in \{0,1\}, \ c_1=c_7, \ c_3=c_9, \  c_5=c_{11} \right\}$
    and similar expressions can be derived for the subfields $\rats$, $\rats(i)$, and $\rats(\sqrt{3})$.
\end{example}

\begin{lemma}
\label{Lem:EsubK}
Let $\mathscr{X}=(X,\cA)$ be an association scheme with Bose-Mesner algebra $\BMA=\cx[\cA]$ having basis $\{E_0,\ldots,E_d\}$ of primitive idempotents. Let $\rats \subseteq \KK,\EE \subseteq \cx$. Then
\begin{itemize}
    \item[(i)] the vector space $\KK[\cE_\KK]$ 
    is closed under both conjugate transpose and ordinary matrix multiplication and contains $I$ and $J$;
    \item[(ii)] if $\KK\subseteq \EE$, then $\cE_\KK\subseteq \cE_\EE$ and $\cx\left[\cE_\KK\right] \subseteq \cx \left[\cE_\EE\right]$;
    \item[(iii)] $\cE_\EE \cap \cE_\KK = \cE_{\EE\cap \KK}$;
    \item[(iv)]  $\cE_\KK\ = \cE_\cx$ if and only if $\KK$ contains the splitting field of $\mathscr{X}$;
        \item[(v)] the complexification $\cx[\cE_{\KK}]$
    is the Bose-Mesner algebra of an association scheme if and only if $\KK[\cE_\KK]$ is closed under Schur multiplication.
\end{itemize}
\end{lemma}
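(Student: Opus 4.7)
The plan is to reduce all five parts to a single Galois-theoretic correspondence. Since each $A_i$ has rational entries, the splitting field $\FF$ of $\mathscr{X}$ is Galois over $\rats$ (its generators, the eigenvalues of the $A_i$, come in complete orbits under $\Gal(\overline{\rats}/\rats)$). Writing $G=\Gal(\FF/\rats)$, any $\sigma\in G$ acts entrywise on $\Mat_X(\FF)$ fixing every $A_i$; since it is also a $\rats$-algebra automorphism, it permutes the pairwise orthogonal primitive idempotents, giving a permutation $\sigma:\{0,\ldots,d\}\to\{0,\ldots,d\}$ via $\sigma(E_j)=E_{\sigma(j)}$. By Galois theory, a matrix in $\Mat_X(\FF)$ has entries in $\KK$ iff it is fixed by the subgroup $H=\Gal(\FF/\KK\cap\FF)$, so $F=\sum_{j\in S}E_j$ lies in $\cE_\KK$ precisely when $S$ is a union of $H$-orbits on $\{0,\ldots,d\}$.

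From this bijection, parts (ii)--(iv) are immediate. For (ii), $\KK\subseteq\EE$ forces $H_\EE\subseteq H_\KK$, so any $H_\KK$-orbit-union is an $H_\EE$-orbit-union, giving $\cE_\KK\subseteq\cE_\EE$ and hence the span inclusion. For (iii), $F\in\cE_\cx$ lies in $\cE_\EE\cap\cE_\KK$ iff its entries lie simultaneously in $\EE$ and $\KK$, iff in $\EE\cap\KK$. For (iv), if $\cE_\KK=\cE_\cx$ then each primitive $E_j$ lies in $\cE_\KK$, so all its entries---which together generate $\FF$---are in $\KK$, forcing $\FF\subseteq\KK$; conversely, if $\FF\subseteq\KK$ then $H$ is trivial and every subset of indices trivially qualifies.

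Part (i) is where most of the bookkeeping occurs. Closure under matrix multiplication follows because if $F,F'$ correspond to $H$-invariant index sets $S,S'$, then $FF'=\sum_{j\in S\cap S'}E_j$ corresponds to $S\cap S'$, again $H$-invariant. Containment of $I=\sum_{j=0}^d E_j$ (full index set) and $J=|X|E_0$ (the singleton $\{0\}$, which is always an $H$-orbit since $E_0=\tfrac{1}{|X|}J$ is rational) is clear. Closure under conjugate transpose is the delicate point: the permutation $*$ of $\{0,\ldots,d\}$ defined by $E_j^\dagger=E_{j^*}$ must send $H$-orbit-unions to $H$-orbit-unions. I would argue this by writing $E_j^\dagger=\overline{E_j^\top}$, noting that the transpose is plainly $G$-equivariant on the index set and that entrywise complex conjugation belongs to $G$ itself (since $\FF$ is closed under conjugation); this is the step I expect to require the most care, and in particular one verifies that the resulting $*$ commutes with the $H$-action.

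Finally, for (v), by (i) the complexification $\cx[\cE_\KK]$ is a commutative $\cx$-subalgebra of $\Mat_X(\cx)$ closed under conjugate transpose and containing $I$ and $J$, so by the standard characterisation of Bose-Mesner algebras it is the Bose-Mesner algebra of an association scheme iff it is additionally closed under Schur product. The remaining task is to transfer Schur closure between $\cx[\cE_\KK]$ and $\KK[\cE_\KK]$. To do this, I would use the orbit-sums $O_\alpha=\sum_{j\in\alpha}E_j$, one per $H$-orbit $\alpha$ on $\{0,\ldots,d\}$; these form a $\cx$-basis of $\cx[\cE_\KK]$ and a $\KK$-basis of $\KK[\cE_\KK]$ consisting entirely of $\KK$-valued matrices. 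Any Schur product $O_\alpha\circ O_\beta$ has entries in $\KK$, so it lies in $\cx[\cE_\KK]$ iff it lies in $\KK[\cE_\KK]$; descending coefficients from $\cx$ to $\KK$ uses only that the orbit-sum basis is $\KK$-linearly independent with $\KK$-valued entries, and the equivalence in (v) follows.
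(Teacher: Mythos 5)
Your overall route is sound and, for parts (ii)--(v), agrees in substance with the paper's while being considerably more explicit: the paper dismisses (ii)--(iv) as straightforward and proves (v) by the same appeal to the Bose--Mesner theorem, and your identification of $\cE_\KK$ with the unions of $\Gal(\FF/\KK\cap\FF)$-orbits is exactly the content of the paper's later Lemma \ref{Lem:GaloisMergings}. Your scalar-descent argument for transferring Schur closure between $\KK[\cE_\KK]$ and $\cx[\cE_\KK]$ (unique solvability of a $\KK$-linear system is insensitive to extension of scalars) is correct and is more careful than what the paper records.

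The one step you flag as delicate is where your argument has a genuine gap --- and it is also the step with the simplest resolution. You propose to prove closure under conjugate transpose by factoring $\dagger$ as transpose followed by entrywise conjugation and then ``verifying that the resulting $*$ commutes with the $H$-action.'' That verification is not supplied, and it is not automatic: entrywise complex conjugation restricts to an element $c$ of $\Gal(\FF/\rats)$, and $c$ carries $H$-orbits to $cHc^{-1}$-orbits; unless $c$ normalises $H$ (equivalently, unless $\KK\cap\FF$ is stable under complex conjugation), there is no reason for $c$ to send unions of $H$-orbits to unions of $H$-orbits, so the argument as sketched does not close. The observation that makes this step trivial --- and the one the paper's proof uses --- is that in a commutative association scheme each primitive idempotent $E_j$ is the orthogonal projection onto a maximal common eigenspace of the commuting normal matrices $A_i$, hence is Hermitian; consequently every element of $\cE_\KK$, being a $01$-combination of the $E_j$, is itself Hermitian, closure under $\dagger$ is immediate, and the permutation $*$ you introduce is the identity. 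With that substitution, part (i) is complete and the rest of your proof stands.
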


\begin{proof}
    \textsl{(i)} Each $E\in \cE_\KK$ is Hermitian and those satisfying $EF=c_F E$ (some $c_F$) for all $F\in \cE_{\KK}$ form a basis of pairwise orthogonal idempotents. So we have closure under ordinary multiplication. \\
    \textsl{(ii)}, \textsl{(iii)} and \textsl{(iv)} are straightforward. \\
    \textsl{(v)} By the Bose-Mesner Theorem (\cite[Prop.~2.15]{BBIT}, \cite[Thm.~2.6.1]{bcn}), 
    a vector space $\mathbf{A}\subseteq \Mat_X(\cx)$ is the Bose-Mesner algebra of an  association scheme if and only if it is closed under transposition, matrix multiplication, and Schur multiplication and contains $I$ and $J$.  Since $\KK[\cE_{\KK}]$ is closed under the conjugate transpose operation, its Schur closure is closed under taking transposes. 
\end{proof}

We now introduce terminology for the case where condition \textsl{(v)} holds.

\begin{definition}
    Let $\mathscr{X}=(X,\cR)$ be an association scheme with Bose-Mesner algebra $\BMA$ having primitive idempotents $E_0,\ldots,E_d$. For a subfield $\KK \subseteq \cx$, we say $\mathscr{X}$ (likewise $\BMA$) has \emph{Property} $\PropM{\KK}$ if $\KK[\cE_\KK]$, where $\cE_\KK$ is 
    as defined in (\ref{Eqn:EK}), is closed under Schur multiplication. This occurs precisely when condition \textsl{(v)} of Lemma \ref{Lem:EsubK} holds.  We  denote the Bose-Mesner algebra $\cx[\cE_{\KK}]$ by  $\BMA_{\downarrow_{\KK}}$. (Note that we use the notation $\BMA_{\downarrow_{{\KK}}}$ only when $\BMA$ has Property $\PropM{\KK}$.)
\end{definition}

The two standard bases  $\{A_0,\ldots,A_d\}$ and $\{E_0,\ldots,E_d\}$ for the Bose-Mesner algebra $\BMA$ of $\mathscr{X}$  are related by the first and second eigenmatrices\footnote{We adopt the notation of \cite[Sec.~2.2]{bcn}, but see also \cite[Sec.~2.3]{del}, \cite[(3.9)]{banito},  \cite[Sec.~12.2]{godsil}, \cite[(2.4),(2.5)]{BBIT}.} $P$ and $Q$ by
\begin{equation}
\label{Eqn:PQ} 
\hfill  A_i = \sum_{j=0}^d P_{ji} E_j, \hspace{1in} E_j = \frac{1}{|X|} \sum_{i=0}^d Q_{ij} A_i.  \hfill
\end{equation}
So the matrices $P=\left[ P_{ji} \right]_{j,i=0}^d$ and $Q=\left[ Q_{ij} \right]_{i,j=0}^d$ are, up to scalar multiple, inverses of one another.  But each may be obtained from the other in another way.  Each digraph $(X,R_i)$ is regular, $A_i \ones = v_i \ones$ for $v_i=P_{0i}$. Set  $\Delta_v$ as the $(d+1)\times (d+1)$ diagonal matrix with $(i,i)$-entry $v_i$ and $\Delta_m$  the $(d+1)\times (d+1)$ diagonal matrix with $(j,j)$-entry $m_j=\rk E_j$. Then $P$ and $Q$ 
satisfy both the first and second \emph{orthogonality relations} \cite[Thm.\ II.3.5]{banito}, \cite[Sec.~2.2]{bcn} for association schemes, 
\begin{equation}
\label{Eqn:orthog} 
 \hfill PQ = |X| I , \hspace{1in}  \Delta_{m} P \Delta_m^{-1} = Q^\dagger.  \hfill 
\end{equation}
The \emph{Krein parameters} of $X$ are those scalars $q_{ij}^k$ ($0\le i,j,k\le d$) for which $E_i \circ E_j = \frac{1}{|X|}  \sum_{k=0}^d q_{ij}^k E_k$ and these also satisfy $Q_{hi}Q_{hj} = \sum_{k=0}^d q_{ij}^k Q_{hk}$ for each $0\le h\le d$. The  \emph{Krein conditions} (e.g., \cite[Thm.\ 2.3.2]{bcn}) state that, for each $i,j,k$, $q_{ij}^k \ge 0$.  We will refer to the  extension of $\rats$ by all $q_{ij}^k$ as the  \emph{Krein field}  of $\mathscr{X}$. In some cases, this is a proper subfield of the splitting field; in particular, it is contained in $\re$ even when some $A_i$ is not symmetric. This holds because each $E_i$ is a Hermitian matrix as is $E_i \circ E_j$.

\section{Fusion schemes and the Galois group}
\label{Sec:fusion}

Let $X$ be a finite nonempty set and let $\mathscr{X}=(X,\{R_0,R_1,\ldots,R_d\})$ and $\mathscr{F}=(X,\{R'_0,R'_1,\ldots$, $R'_e\})$ be association schemes on $X$. We say $\mathscr{F}$ is a \emph{fusion (scheme)} of $\mathscr{X}$ if each $R_i$ is contained in some $R'_j$; that is,
each relation $R'_j$ is expressible as a union of basis relations (or by \emph{fusing} or \emph{merging} basis relations) in $\{R_0,\ldots,R_d\}$.  Some authors also refer to $\mathscr{X}$ as a \emph{fission scheme} with respect to $\mathscr{F}$ while Godsil \cite[Chap.~5]{godsilschemes}, for example,  uses the term ``subscheme'' for what we call a fusion scheme to highlight the relationship between the corresponding Bose-Mesner algebras.  

\begin{example}
Fusion schemes play key roles in several examples of and constructions for association schemes. Several of the examples we list here are fusions of product schemes. For $\mathscr{X}=(X,\cA)$ and 
$\mathscr{Y}=(Y,\cB)$, the \emph{product scheme} $\mathscr{X}\times \mathscr{Y}$ has vertex set
$X\times Y$ and adjacency matrices $\{ A\otimes B \mid A\in \cA, \, B\in \cB\}$ and this obviously generalises to any number of components.
\begin{itemize}
\item 
If $\Gamma$ and $\Delta$ are distance-regular graphs whose cartesian product $\Gamma \, \square \, \Delta$ is also distance-regular then each of these gives us an association scheme whose relations are determined by distance in the respective graphs. In this setting, the association scheme generated in this manner by $\Gamma \, \square \, \Delta$ is a fusion of the product of the two association schemes generated by $\Gamma$ and $\Delta$. 
(See \cite{baileycameron} for other interesting fusions of product schemes.)
\item
Some distance-regular graphs are obtained by fusing classes in the association schemes
of other distance-regular graphs \cite[Sec.~4.2F]{bcn}. For example, the Ustimenko graphs \cite[Sec.~9.1]{bcn} can be constructed by fusing distances one and two in the symplectic forms dual polar space graphs.
\item
The $m$-fold extension of an association scheme $\mathscr{X}$ \cite[Sec.~2.5]{del} is a fusion of 
the $m$-fold  product of $\mathscr{X}$. 
\item
Every imprimitive association scheme contains a fusion scheme 
which restricts to a trivial scheme on each cell of the imprimitivity partition (this is a product of the corresponding quotient scheme with a trivial scheme) \cite[Sec.~2.4]{bcn} (see also \cite[Sec.~2.3]{DMM} where the term ``Bose-Mesner subalgebra'' is used).
\item
Every translation association scheme \cite[Sec.~2.10]{bcn} is a fusion scheme of the conjugacy class 
scheme (with all valencies one) of the underlying abelian group \cite[p52]{bcn}.
\item an \emph{amorphic association scheme} is an association scheme enjoying the property that every possible merging of relations yields a fusion scheme. Provided $d>2$, each basis relation is the adjacency relation of some strongly regular graph. 
\end{itemize}
\end{example}

The fusion schemes of a given association scheme form a meet semilattice. We now list this and some other basic facts about fusions.

\begin{proposition}
Let $\mathscr{X}=(X,\{R_0,\ldots,R_d\})$ be an association scheme with Bose-Mesner algebra $\BMA$  and let $\mathscr{F}_1$, $\mathscr{F}_2$ denote fusion schemes of $\mathscr{X}$ with Bose-Mesner algebras $\BMB_1$ and $\BMB_2$. 
\begin{itemize}
\item[(i)] $\mathscr{X}$ has two trivial fusions, $\mathscr{X}$  and its minimal fusion, the complete graph $(X,\{R_0, R_1\cup \cdots \cup R_d\})$;
\item[(ii)] any fusion scheme of $\mathscr{F}_1$ is also a fusion scheme of $\mathscr{X}$;
\item[(iii)] if $\mathscr{F}_1$ and $\mathscr{F}_2$ are both fusions of $\mathscr{X}$, then $\BMB_1 \cap \BMB_2$ is the Bose-Mesner algebra of some fusion scheme of $\mathscr{X}$, the unique maximal common fusion
of $\mathscr{F}_1$ and $\mathscr{F}_2$;
\item[(iv)] the splitting field of $\mathscr{F}_1$ is a subfield of the splitting field of $\mathscr{X}$. \hfill $\Box$
\end{itemize}
\end{proposition}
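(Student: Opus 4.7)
The four parts of this proposition are structural and mostly routine; my plan is to dispose of (i), (ii), and (iv) quickly and spend the bulk of the effort on (iii), which is the only statement that requires genuine invocation of the Bose--Mesner theorem (already cited in the proof of Lemma \ref{Lem:EsubK}(v)).

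For (i), I would simply note that $\mathscr{X}$ is a fusion of itself under the identity partition, and that $(X,\{R_0,R_1\cup\cdots\cup R_d\})$ is a valid (one-class) association scheme whose non-identity relation is manifestly the union of all $R_i$ with $i\ge 1$. These are extremal because every fusion partition of $\{0,\ldots,d\}$ refines the one-block partition (fixing only $R_0$ as a block by itself) and is refined by the discrete partition. For (ii), writing each relation of a fusion of $\mathscr{F}_1$ as a union of $\mathscr{F}_1$-relations and then rewriting each of those as a union of $R_i$'s gives the desired presentation; transitivity of ``is a union of'' does all the work.

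For (iii), the main task is to show $\BMB_1\cap\BMB_2$ is itself a Bose--Mesner algebra and to identify its scheme as a fusion of $\mathscr{X}$. The five characterising properties from the Bose--Mesner theorem --- closure under conjugate transpose, ordinary matrix multiplication, and Schur multiplication, together with containing $I$ and $J$ --- are all inherited by intersections, so $\BMB_1\cap\BMB_2$ is the Bose--Mesner algebra of some scheme $\mathscr{F}$ on $X$. To realise $\mathscr{F}$ as a fusion of $\mathscr{X}$, I would invoke the simple fact that any $0,1$-matrix lying in $\BMA=\spn\{A_0,\ldots,A_d\}$ is a $\{0,1\}$-combination of the $A_i$ (because the $A_i$ have pairwise disjoint supports partitioning $X\times X$). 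Hence each Schur idempotent of $\BMB_1\cap\BMB_2\subseteq\BMA$ is a sum of some of the $A_i$'s, so $\mathscr{F}$ is a fusion of $\mathscr{X}$. Maximality among common fusions is immediate: the Bose--Mesner algebra of any common fusion sits inside both $\BMB_1$ and $\BMB_2$, hence inside their intersection.

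For (iv), I would argue at the level of primitive idempotents. Each $F_k$ of $\mathscr{F}_1$ lies in $\BMA$, so $F_k=\sum_{j\in S_k}E_j$ for some $S_k\subseteq\{0,\ldots,d\}$. If $B_\ell=\sum_{i\in P_\ell}A_i$ is an adjacency matrix of $\mathscr{F}_1$, then $B_\ell F_k = (\sum_{i\in P_\ell}\sum_{j\in S_k}P_{ji})\,F_k/|S_k|$ expressed correctly as a single scalar multiple by the Bannai--Muzychuk condition; the point is that the eigenvalue in question is $\sum_{i\in P_\ell}P_{ji}$ for any $j\in S_k$. This is a finite rational combination of eigenvalues of $\mathscr{X}$ and therefore lies in its splitting field. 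The main obstacle, such as it is, comes in (iii) at the step of recognising that $0,1$-matrices in $\BMA$ are automatically sums of the $A_i$; once that is noted, the rest of the proof is formal bookkeeping.
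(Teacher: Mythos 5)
Your proposal is correct; note that the paper states this proposition as a collection of well-known facts and gives no proof at all (it is closed with an immediate $\Box$), so there is no argument of the authors' to compare against. Your verification is the standard one, and you correctly isolate the two points that actually need saying: that a $01$-matrix in $\BMA$ is automatically a sum of the $A_i$ (which makes the intersection in (iii) a fusion and gives maximality), and that in (iv) the eigenvalues of the fused adjacency matrices are the constant row sums $\sum_{i\in P_\ell}P_{ji}$ over each cell of the dual partition, hence lie in the splitting field of $\mathscr{X}$.
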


Given an association scheme $\mathscr{X}=(X,\cA)$ with Bose-Mesner algebra $\BMA$ and primitive idempotents $\{E_0,\ldots,E_d\}$, any partition  $\{\cO_0,\cO_1,\ldots,\cO_e\}$  of $\{0,1,\ldots,d\}$ gives
us a Schur-closed vector subspace of $\BMA$ by merging classes, namely  $\cx\left[ \left\{ \sum_{i\in \cO_j} A_i \middle|  0\le j\le e\right\}\right]$. Dually, the subspace $\cx\left[ \left\{ \sum_{i\in \cO_j} E_i \middle|  0\le j\le e\right\}\right]$ is always closed under ordinary matrix multiplication. 
We next discuss how to check whether one of these subspaces is a Bose-Mesner algebra. 

The \emph{Bannai-Muzychuk criterion} is the fundamental computational tool in testing for fusion schemes of an association scheme with known parameters. It comes in two forms.

\begin{proposition}[Bannai-Muzychuk Criterion \cite{bannai,muzy}]\
\label{Prop:BannaiMuzychuk}
Let $\mathscr{X}=(X,\{R_0,\ldots,R_d\})$ be an association scheme with eigenmatrices $P$ and $Q$.
 Let $\{\cO_0,\cO_1,\ldots,\cO_e\}$ be a partition of $\{0,1,\ldots,d\}$ with $\cO_0=\{0\}$ and define the $(d+1)\times (e+1)$ partition matrix $O$ by $O_{ij}=1$ if $i \in \cO_j$ and $O_{ij}=0$ otherwise. 
\begin{itemize}
\item[(i)] The matrix $PO$ has at least $e+1$ distinct rows; equality holds if and only if the set
$\left\{ \cup_{i\in \cO_j} R_i \middle|  0\le j\le e\right\}$ is the set of relations of a fusion scheme of $\mathscr{X}$;
\item[(ii)] The matrix $QO$ has at least $e+1$ distinct rows; equality holds if and only if the set
$\left\{ \sum_{i\in \cO_j} E_i \middle|  0\le j\le e\right\}$ is the set of minimal idempotents of
the Bose-Mesner algebra of a fusion scheme of $\mathscr{X}$. \hfill $\Box$
\end{itemize}
\end{proposition}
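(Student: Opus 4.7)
The plan is to handle the two parts in parallel using the change-of-basis identities (\ref{Eqn:PQ}). For part (i), set $A'_k = \sum_{i \in \cO_k} A_i$; then $A'_k = \sum_{j=0}^d (PO)_{jk}\, E_j$, so the $k$th column of $PO$ lists the eigenvalues of $A'_k$ on the simultaneous $E_j$-eigenspaces. Since the columns of $O$ are linearly independent (indicator vectors of a nontrivial partition) and $P$ is invertible, $\rk(PO) = e+1$, and the rank of a matrix is at most its number of distinct rows, giving the lower bound for $PO$. Part (ii) begins dually from $E'_\ell := \sum_{j \in \cO_\ell} E_j = \tfrac{1}{|X|}\sum_i (QO)_{i\ell}\, A_i$, yielding the analogous lower bound for $QO$.

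For the equivalence in (i), observe that $\spn_\cx\{A'_0,\ldots,A'_e\}$ is automatically Schur-closed (the $A'_k$ are disjointly-supported $\{0,1\}$-matrices summing to $J$), contains $I = A'_0$ (since $\cO_0 = \{0\}$) and $J = \sum_k A'_k$, and is commutative as a subspace of $\BMA$, so only closure under ordinary matrix multiplication is in doubt. Since $A'_0,\ldots,A'_e$ are simultaneously diagonalised by the $E_j$, any polynomial $p(A'_0,\ldots,A'_e)$ expands as $\sum_j p\bigl((PO)_{j0},\ldots,(PO)_{je}\bigr)E_j$, so the matrix algebra they generate has dimension equal to the number of distinct rows of $PO$. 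This generated algebra agrees with the $(e+1)$-dimensional span exactly when $PO$ has $e+1$ distinct rows, in which case the primitive idempotents of the fusion are $\sum_{j \in \cJ_\ell}E_j$ for the row-equivalence classes $\cJ_0,\ldots,\cJ_e$ of $PO$.

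Part (ii) is formally dual. The $E'_\ell$ are pairwise orthogonal Hermitian idempotents summing to $I$, so their span is automatically closed under ordinary multiplication, contains $E_0 = \tfrac{1}{|X|}J$, and is commutative; only Schur-closure is at issue. Since the $A_i$'s are a complete system of orthogonal Schur-idempotents, the Schur-subalgebra generated by $\{E'_0,\ldots,E'_e\}$ has minimal Schur-idempotents $\bigl\{\sum_{i \in \cI_\ell}A_i : 0\le \ell\le f\bigr\}$ where $i \sim i'$ iff the coefficient $\tfrac{1}{|X|}(QO)_{i\ell}$ of $A_i$ in $E'_\ell$ equals that of $A_{i'}$ for every $\ell$. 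Thus the Schur-closure has dimension equal to the number of distinct rows of $QO$, and it coincides with the span iff this number is $e+1$.

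I expect the main subtlety to be verifying closure under the transpose operation, i.e., that the candidate partition is stable under the involution $i\mapsto i'$ from axiom (ii) (and dually $j\mapsto j^*$). My plan is to exploit the fact that $A_i$ is real and $\overline{E_j} = E_{j^*}$, so conjugation simultaneously applies the permutation $i\mapsto i'$ to the columns of $P$ and the permutation $j\mapsto j^*$ to its rows; hence two rows of $PO$ are equal iff their $*$-images are equal, which forces each row-equivalence class (each block $\cO_j$) to be $*$-stable. The dual statement for (ii) follows from the second orthogonality relation in (\ref{Eqn:orthog}). Once this is in hand, verifying the Bose-Mesner axioms for the resulting span is routine.
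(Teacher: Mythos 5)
The paper states this proposition without proof, citing Bannai \cite{bannai} and Muzychuk \cite{muzy}, so there is no internal argument to compare against; your proposal is essentially the standard proof and its core is correct. The lower bounds follow, as you say, from $\rk(PO)=\rk(QO)=e+1$ (invertibility of $P$ and $Q$ plus linear independence of the columns of $O$) together with the fact that rank is bounded by the number of distinct rows; and the equivalences follow by identifying the ordinary (resp.\ Schur) algebra generated by the fused basis with the span of the idempotents $\sum_{j\in\cJ_\ell}E_j$ (resp.\ Schur idempotents $\sum_{i\in\cI_m}A_i$) attached to the row-equivalence classes of $PO$ (resp.\ $QO$), whose dimension equals the number of distinct rows by polynomial interpolation. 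The one step you should tighten is the transpose-closure, which you rightly flag but leave as a plan and where you briefly conflate the two partitions in play: in part (i) the blocks $\cO_k$ index \emph{relations}, while the row-equivalence classes of $PO$ index \emph{idempotents}, and what the Bose--Mesner theorem requires is that $\spn_\cx\{A'_0,\ldots,A'_e\}$ be closed under transposition, i.e.\ that $i\mapsto i'$ permute the blocks $\cO_k$. The cleanest way to finish is to note that once $PO$ has exactly $e+1$ distinct rows one has $\spn_\cx\{A'_0,\ldots,A'_e\}=\spn_\cx\bigl\{\sum_{j\in\cJ_\ell}E_j : 0\le \ell\le e\bigr\}$; this space is closed under entrywise conjugation (being the $\cx$-span of real matrices) and under conjugate transpose (being the $\cx$-span of Hermitian idempotents), hence under transposition, and the dual argument handles part (ii). With that patch, and granting that $\cO_0=\{0\}$ supplies $I$ and $J$ in both spans, your proof is complete.
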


Suppose $\mathscr{F}=(X,\{A_0',\ldots,A_e'\})$ is a fusion scheme of $\mathscr{X}=(X,\{A_0,\ldots,A_d\})$ with $A'_j = \sum_{i\in \cO_j} A_i$ ($0\le j\le e$) as in the proposition above. The partition $\{\cO_0,\ldots,\cO_e\}$ of $\{0,1,\ldots,d\}$ induces a partition $\{ \cS_0,\ldots,\cS_e\}$ of the rows of $P$ as follows \cite[Sec.~5.2]{godsil}: the $(e+1)\times (d+1)$ matrix $S$ with $S_{ij}=1$ if $j\in \cS_i$ and $S_{ij}=0$ otherwise satisfies $PO=S P_{\mathscr{F}}$ for some $(e+1)\times (e+1)$ matrix $P_{\mathscr{F}}$ which, according to the theorem of Bannai and Muzychuk, is the first eigenmatrix of the corresponding fusion scheme. As $QP=|X|I$, 
we then have $O=\frac{1}{|X|}QS P_{\mathscr{F}}$, or $QS=OQ_{\mathscr{F}}$ where $Q_{\mathscr{F}}$ is the second eigenmatrix of $\mathscr{F}$.

 \bigskip
 
Our primary interest in fusion schemes of $\mathscr{X}$  is their connection to subfields of the
splitting field of $\mathscr{X}$.

\begin{example}
\label{Ex:Cn}
Consider the association scheme $C_n=(\ints_n,\cR)$  of the $n$-cycle, a commutative $d$-class scheme for $d=\lfloor n/2\rfloor$ with vertex set $\ints_n$. We have relations $\cR=\{R_0,\ldots, R_d\}$ defined by $(x,y)\in R_i$ if $y=x\pm i$ in $\ints_n$.  Set $R'_0=R_0$ and, for each divisor $e$ of $n$, define $R'_e = {\bigcup_{\gcd(i,n)=e} R_i}$. Then we obtain a fusion scheme $\left(\ints_n,\left\{ R'_0,\ldots,R_{\nu(n)} \right\} \right)$ of the scheme of the $n$-cycle where $\nu(n)$ is the number of positive divisors of $n$.  We will see later that all eigenvalues of this ``gcd scheme'' are rational.  All fusion schemes of this latter scheme were classified by Muzychuk \cite{muzy}.
\end{example}

\subsection{The Galois group of an association scheme}
\label{Subsec:galois}

To our knowledge, the Galois group of (the splitting field of) an association scheme $\mathscr{X}=(X,\cA)$ was first studied by Munemasa in \cite{munemasa}. We defined this above as the extension of the rationals by all eigenvalues of the scheme; alternatively, if $Q$ is the second eigenmatrix of $\mathscr{X}$, the \emph{splitting field} is defined as $\rats(Q_{11},\ldots,Q_{dd})$ and will be henceforth denoted by $\FF$.
Munemasa proved that, for any subfield $\KK$ of $\FF$ containing all the Krein parameters of $\mathscr{X}$, $\Gal(\FF/\KK)$ is contained in the center of $\Gal(\FF/\rats)$.

As each $A_i$ is rational, each $\bm{\sigma}$ in the Galois group $\Gal(\FF/\rats)$ maps each $A_i$ to itself. On the other hand, the Galois group acts faithfully on the set $\{E_0,E_1,\ldots,E_d\}$ \cite[Thm.~2.1]{finitem1}. 

\begin{definition}
Let $(X,\cA)$ be an association scheme with splitting field $\FF$ and primitive idempotents $E_0,\ldots,E_d$ and let $\rats \subseteq \KK \subseteq \FF$.
 Write $M^{\bm{\sigma}}$ for the matrix obtained by applying 
$\bm{\sigma}$ to $M$ entrywise.  For each $\bm{\sigma} \in \Gal(\FF/\KK)$, there is a  permutation $\sigma \in \Sym(\{0,\ldots,d\})$ defined by $E_{j^\sigma}=\left( E_j\right)^{\bm{\sigma}}$. We define the corresponding permutation group
\[
\Sigma_\KK:=\{ \sigma \mid \bm{\sigma} \in \Gal(\FF/\KK)\}.
\]   
\end{definition}

Since the action of $\Sigma_\rats$ is faithful,  so too is the action of $\Sigma_\KK$, since $\Gal(\FF/\KK) \leq \Gal(\FF/\rats)$ for any $\rats \subseteq \KK \subseteq \FF$. One consequence of this is that, for distinct subfields $\EE$ and $\mathbb{L}$ of the splitting field $\FF$, $\cE_\KK \neq \cE_\mathbb{L}$.
In the case where all Krein parameters are rational, we also have an action on the columns of matrix $P$ \cite[Thm.~II.7.3]{banito} by the subgroup of $\Gal(\FF/\rats)$ centralising complex conjugation. Note that, for computational purposes, one uses the fact that $\Sigma_\KK$ is  
also the permutation action of $\Gal(\FF/\KK)$ on the columns of the second eigenmatrix $Q$.

\begin{lemma}\label{Lem:GaloisMergings}
Let $\cQ_0=\{0\},\cQ_1,\ldots,\cQ_e$ be the orbits of the action of $\Sigma_\KK$ on $\{0,1,\ldots,d\}$ and, for $0\le \ell \le e$, define
\[ 
F_\ell = \sum_{j\in \cQ_\ell} E_j .
\]
Then the minimal elements of $\cE_\KK$ are precisely  $\{F_0, F_1, \ldots, F_e\}$; that is, $E\in \cE_\KK$ if and only if $E$ is expressible as a sum of some subset of $\{F_0, F_1, \ldots, F_e\}$.
\end{lemma}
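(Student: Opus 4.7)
The plan is to prove the lemma by a pure Galois-descent argument, using nothing more than linear independence of the primitive idempotents and the fact that $\KK$ is the fixed field of $\Gal(\FF/\KK)$. First I would record the key structural fact I need: since $\FF$ is the splitting field (over $\rats$) of the characteristic polynomials of $A_1,\ldots,A_d$, the extension $\FF/\rats$ is Galois, and normality is inherited by the top extension, so $\FF/\KK$ is also Galois. Hence the fixed field of $\Gal(\FF/\KK)$ is precisely $\KK$. With this in hand I would prove the two containments of the ``if and only if'' separately, and then deduce minimality as a by-product.

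For one direction, I would show each $F_\ell$ lies in $\cE_\KK$. Take any $\bm{\sigma}\in\Gal(\FF/\KK)$; entrywise application permutes $\{E_0,\ldots,E_d\}$ via the permutation $\sigma\in\Sigma_\KK$, and because $\cQ_\ell$ is a $\Sigma_\KK$-orbit $\sigma$ maps $\cQ_\ell$ onto itself, so
\[
F_\ell^{\bm{\sigma}} \;=\; \sum_{j\in\cQ_\ell} E_{j^{\sigma}} \;=\; F_\ell .
\]
Every entry of $F_\ell$ is therefore fixed by the full group $\Gal(\FF/\KK)$ and so lies in $\KK$, giving $F_\ell\in\cE_\KK$. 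Any $\{0,1\}$-sum of the $F_\ell$ is likewise $\Gal(\FF/\KK)$-fixed and hence in $\cE_\KK$.

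For the converse, let $F=\sum_{j\in S}E_j\in\cE_\KK$ with $S\subseteq\{0,\ldots,d\}$. For $\bm{\sigma}\in\Gal(\FF/\KK)$,
\[
F^{\bm{\sigma}} \;=\; \sum_{j\in S} E_{j^{\sigma}} \;=\; \sum_{j\in S^{\sigma}} E_j,
\]
while $F\in\Mat_X(\KK)$ forces $F^{\bm{\sigma}}=F$. Linear independence of the primitive idempotents then gives $S^{\sigma}=S$ for every $\sigma\in\Sigma_\KK$, so $S$ is $\Sigma_\KK$-stable and must be a disjoint union of orbits $\cQ_\ell$. Consequently $F$ is the sum of the corresponding $F_\ell$, as claimed.

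Minimality of the $F_\ell$ then falls out immediately: any proper nonempty subset of a single orbit $\cQ_\ell$ is not $\Sigma_\KK$-stable (by transitivity of the action on the orbit), so by the argument just given no proper nonzero subsum of $F_\ell$ can belong to $\cE_\KK$. The only delicate step in this plan is the very first one, namely confirming that $\KK$ is exactly the fixed field of $\Gal(\FF/\KK)$; this is the main obstacle to watch for, but it is handled cleanly once one observes that $\FF/\rats$ is a splitting field and hence normal, so all intermediate extensions $\FF/\KK$ are Galois.
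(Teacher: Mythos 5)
Your proof is correct and follows essentially the same route as the paper's: the forward direction via $\Gal(\FF/\KK)$-invariance of each $F_\ell$ (using that $\FF/\KK$ is Galois so its fixed field is $\KK$), and the converse via linear independence of the $E_j$ forcing the coefficient set to be a union of $\Sigma_\KK$-orbits. You merely spell out the Galois-descent justification and the minimality claim more explicitly than the paper does.
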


\begin{proof}
It is clear that each $F_\ell$ has all entries in $\KK$ as $F_\ell^{\bm{\sigma}}=F_\ell$ for each $\bm{\sigma}\in \Gal(\FF/\KK)$. Conversely, if $E\in \cE_\KK$ with $E=\sum_{j=0}^d c_j E_j$ with 
each $c_j\in \{0,1\}$, then $c_{j^\sigma}=c_j$ for each $j$ and each $\sigma\in \Sigma_\KK$ so that $EF_\ell = c_j F_\ell$ for any $j\in \cQ_\ell$. 
\end{proof}

Recall that $\mathscr{X}$ is said to have Property $\PropM{\KK}$ if $\cE_\KK$ is closed under Schur multiplication. That is, Property $\PropM{\KK}$ holds for $\mathscr{X}$  when $\cx[\cE_{\KK}]$ is a Bose-Mesner algebra, denoted by  $\BMA_{\downarrow_\KK}$. This occurs precisely when there is a corresponding fusion of $\mathscr{X}$. In particular, this fusion can be determined by applying the Bannai-Muzychuk criterion (Proposition \ref{Prop:BannaiMuzychuk}) with the partition determined by the orbits $\cQ_\ell$ of $\Sigma_\KK$ 
on the minimal idempotents defined in Lemma \ref{Lem:GaloisMergings}. (Indeed, if $QO$ has exactly $r$ distinct rows, then the Schur closure of $\cx\left[ \cE_\KK \right]$ has dimension $r$.)

\begin{definition}
    Let $\mathscr{X}=(X,\cR)$ be an association scheme satisfying Property $\PropM{\KK}$. We denote by $\mathscr{X}_{\downarrow_{{\KK}}}$ the fusion scheme which has Bose-Mesner algebra $\BMA_{\downarrow_{{\KK}}}$. We shall call $\mathscr{X}_{\downarrow_{{\KK}}}$ the \emph{Galois fusion of $\mathscr{X}$ with respect to $\KK$} in light of Lemma \ref{Lem:GaloisMergings} and the Bannai-Muzychuk criterion. We shall refer to $\mathscr{X}_{\downarrow_{{\rats}}}$ simply as the \emph{Galois fusion} of $\mathscr{X}$.
    (Note that we use the notation $\mathscr{X}_{\downarrow_{{\KK}}}$ only when $\mathscr{X}$ has Property $\PropM{\KK}$.)
\end{definition}

The following is also well known.

\begin{proposition}
\label{Prop:symmetrization}
A commutative association scheme has only real eigenvalues if and only if it is symmetric. Given any non-symmetric commutative scheme $(X,\cR)$ with splitting field $\FF$, the Galois group $\Gal\left(\FF/(\FF \cap \re) \right)$ has complex conjugation as its sole non-identity element and the relative Galois fusion of our non-symmetric scheme with respect to  this subgroup of its Galois group is simply the symmetrisation of $(X,\cR)$.  A Galois fusion of $(X,\cR)$ must be a fusion scheme of this symmetrisation. \hfill $\Box$
\end{proposition}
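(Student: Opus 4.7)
The plan is to establish the four assertions in sequence, with the main work in the third, where one identifies $\mathscr{X}_{\downarrow_{\FF_0}}$ with the symmetrisation of $\mathscr{X}$ via a primitivity argument in the symmetric subalgebra, writing $\FF_0 := \FF \cap \re$ throughout.

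For the first assertion, both directions rely on the fact that each $A_i$ is real and normal (since $A_i A_{i'} = A_{i'} A_i$, the scheme being commutative). If $\mathscr{X}$ is symmetric, each $A_i$ is real symmetric with real eigenvalues, so $\FF = \rats(\{P_{ji}\}) \subseteq \re$. Conversely, a real normal matrix with only real eigenvalues is Hermitian by the spectral theorem, hence symmetric; so $\FF \subseteq \re$ forces every $A_i$ to be symmetric. For the second assertion, $\FF$ is closed under complex conjugation because each $A_i$ has real characteristic polynomial, so its eigenvalues come in conjugate pairs and $\FF = \rats(\{P_{ji}\})$ is stable under the conjugation map on $\cx$. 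Assertion one gives $\FF \not\subseteq \re$ (as $\mathscr{X}$ is non-symmetric), so complex conjugation restricts to a nontrivial automorphism of $\FF$ with fixed field $\FF_0$, and Artin's theorem yields $[\FF : \FF_0] = 2$ with $\Gal(\FF/\FF_0)$ generated by complex conjugation.

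For the third assertion, let $\sigma$ denote the permutation of $\{0,\dots,d\}$ induced by complex conjugation via $E_{j^\sigma} = \overline{E_j}$, so $\Sigma_{\FF_0} = \langle \sigma \rangle$. By Lemma~\ref{Lem:GaloisMergings}, the minimal elements of $\cE_{\FF_0}$ are $F_\ell = \sum_{j \in \cQ_\ell} E_j$, one per orbit $\cQ_\ell$ of $\langle \sigma \rangle$. Each $F_\ell$ is Hermitian (sum of Hermitian idempotents) and real ($\overline{F_\ell} = \sum_{j \in \cQ_\ell} E_{j^\sigma} = F_\ell$), hence symmetric, and so belongs to the Bose-Mesner algebra of the symmetrisation. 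To complete the identification, I would show $\{F_\ell\}$ is exactly the set of primitive idempotents of that algebra: any idempotent in a subalgebra of $\BMA$ is necessarily a subset-sum $\sum_{j \in S} E_j$ (since the $E_j$ are the primitive idempotents of $\BMA$), and symmetry of this sum combined with the identity $E_j^\top = \overline{E_j} = E_{j^\sigma}$ (from Hermicity of each $E_j$) forces $S^\sigma = S$, so $S$ is a union of $\langle \sigma \rangle$-orbits. The minimal nonempty such $S$ are the $\cQ_\ell$. This gives Property~$\PropM{\FF_0}$ and identifies $\mathscr{X}_{\downarrow_{\FF_0}}$ with the symmetrisation.

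For the final assertion, read naturally as the claim that any Galois fusion $\mathscr{X}_{\downarrow_\KK}$ with $\rats \subseteq \KK \subseteq \FF_0$ (in particular $\mathscr{X}_{\downarrow_\rats}$) is a fusion of the symmetrisation: since $\KK \subseteq \FF_0$, $\Gal(\FF/\KK) \supseteq \Gal(\FF/\FF_0) = \langle \sigma \rangle$, so every $\Sigma_\KK$-orbit on $\{0,\dots,d\}$ is a union of $\langle \sigma \rangle$-orbits. By Lemma~\ref{Lem:GaloisMergings} this yields $\cE_\KK \subseteq \cE_{\FF_0}$, hence $\BMA_{\downarrow_\KK} \subseteq \BMA_{\downarrow_{\FF_0}}$, placing $\mathscr{X}_{\downarrow_\KK}$ below the symmetrisation in the fusion lattice. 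The main obstacle is the primitivity step in assertion three: one must carefully distinguish the Bose-Mesner algebra of the symmetrisation (which may contain matrices with complex entries) from the subalgebra of real matrices in $\BMA$, and use Hermicity of each $E_j$ to translate the matrix condition of being symmetric into the purely combinatorial condition of having a $\sigma$-closed index set.
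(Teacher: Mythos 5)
Your proof is correct and complete. The paper itself offers no argument here --- the proposition is labelled ``well known'' and closed with an immediate $\Box$ --- so there is no internal proof to compare against; your write-up supplies exactly the standard argument one would expect the authors to have in mind: normality of each $A_i$ (from commutativity and $A_i^\top=A_{i'}$) for the equivalence of ``real eigenvalues'' with ``symmetric'', stability of $\FF$ under conjugation plus Artin for the degree-two statement, and the identification of the idempotents of the symmetrisation's Bose--Mesner algebra with the $\sigma$-orbit sums $F_\ell$ via $E_j^\top=\overline{E_j}=E_{j^\sigma}$. One point you handle well and that deserves emphasis: the final sentence of the proposition is false as literally stated for arbitrary subfields (e.g.\ $\KK=\FF$, or a non-real $\KK$ such as $\rats(\sqrt{-3})$ in Example \ref{Ex:Z12}, yields a Galois fusion that is not a fusion of the symmetrisation), so your restriction to $\rats\subseteq\KK\subseteq\FF\cap\re$ --- covering in particular the unqualified ``Galois fusion'' $\mathscr{X}_{\downarrow_\rats}$ --- is not merely an interpretive choice but a necessary one, and with it the containment $\cE_\KK\subseteq\cE_{\FF\cap\re}$ (Lemma \ref{Lem:EsubK}(ii)) finishes the claim.
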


\begin{remark}
    This is a special case of a much more general phenomenon: stratifiability of a homogeneous coherent configuration.
\end{remark}

\begin{proposition}
\label{Prop:MKintersection}
Let $\KK_1$ and $\KK_2$ be subfields of the splitting field of $\mathscr{X}$. If $\mathscr{X}$ satisfies Property
$\PropM{\KK_1}$ and Property $\PropM{\KK_2}$ then $\mathscr{X}$ satisfies Property $\PropM{{\KK_1 \cap \KK_2}}$.
\end{proposition}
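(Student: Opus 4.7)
The plan is to establish the identity $\cx[\cE_{\KK_1 \cap \KK_2}] = \cx[\cE_{\KK_1}] \cap \cx[\cE_{\KK_2}]$ as subspaces of $\Mat_X(\cx)$, and then to use the trivial fact that the intersection of two Schur-closed subspaces is itself Schur-closed. Granted the identity, Property $\PropM{\KK_i}$ for $i=1,2$ (i.e.\ Schur-closure of $\cx[\cE_{\KK_i}]$) immediately forces Property $\PropM{\KK_1 \cap \KK_2}$.

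First I would verify that $\FF/\rats$ is a Galois extension: since $\FF = \rats(Q_{11},\ldots,Q_{dd})$ is generated over $\rats$ by eigenvalues of the rational matrices $A_0,\ldots,A_d$, and Galois conjugates of an eigenvalue of a rational matrix are again eigenvalues of that matrix, every Galois conjugate of a generator already lies in $\FF$. The standard Galois correspondence then yields
\begin{equation*}
\Gal(\FF/\KK_1 \cap \KK_2) \;=\; \langle \Gal(\FF/\KK_1),\, \Gal(\FF/\KK_2)\rangle .
\end{equation*}
Applying the group homomorphism $\Gal(\FF/\rats) \to \Sym(\{0,\ldots,d\})$, $\bm{\sigma} \mapsto \sigma$, and using that homomorphic images preserve joins of subgroups, this transfers to $\Sigma_{\KK_1 \cap \KK_2} = \langle \Sigma_{\KK_1},\Sigma_{\KK_2}\rangle$. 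In particular, the orbits of $\Sigma_{\KK_1 \cap \KK_2}$ on $\{0,\ldots,d\}$ are the common coarsening of the orbit partitions of $\Sigma_{\KK_1}$ and $\Sigma_{\KK_2}$.

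Next, Lemma \ref{Lem:GaloisMergings} describes $\cx[\cE_{\KK}]$ as the span of the minimal idempotents $F_\ell = \sum_{j \in \cQ_\ell} E_j$, so a matrix $\sum_{j=0}^d c_j E_j$ lies in $\cx[\cE_{\KK}]$ if and only if the coefficient sequence $(c_j)$ is constant on each orbit of $\Sigma_{\KK}$. Hence $\sum c_j E_j \in \cx[\cE_{\KK_1}] \cap \cx[\cE_{\KK_2}]$ precisely when $(c_j)$ is constant on orbits of both groups, equivalently on orbits of $\langle \Sigma_{\KK_1}, \Sigma_{\KK_2}\rangle = \Sigma_{\KK_1 \cap \KK_2}$, which is exactly the membership condition for $\cx[\cE_{\KK_1 \cap \KK_2}]$. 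This establishes the desired identity, and hence the proposition.

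The step I expect to require the most care is the Galois-theoretic bookkeeping: confirming $\FF/\rats$ is Galois and checking that the subfield-to-subgroup correspondence $\KK_1 \cap \KK_2 \leftrightarrow \langle \Gal(\FF/\KK_1),\Gal(\FF/\KK_2)\rangle$ transfers cleanly to the permutation groups $\Sigma_\KK$ acting on primitive idempotents. Everything after that is a direct unpacking of the orbit description of $\cx[\cE_{\KK}]$ provided by Lemma \ref{Lem:GaloisMergings}, together with the observation that Schur-closure is inherited by intersections.
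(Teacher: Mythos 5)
Your proof is correct and takes essentially the same route as the paper, which simply invokes Lemma \ref{Lem:EsubK}(iii) to obtain $\cx\left[\cE_{\KK_1\cap\KK_2}\right]=\cx\left[\cE_{\KK_1}\right]\cap\cx\left[\cE_{\KK_2}\right]$ and concludes because an intersection of Schur-closed spaces is Schur-closed. Your Galois-correspondence and orbit-coarsening argument is precisely the content implicit in that lemma (together with the unstated step that the span of $\cE_{\KK_1}\cap\cE_{\KK_2}$ equals the intersection of the spans), so you have in effect supplied the details behind the paper's one-line proof.
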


\begin{proof}
    By Lemma \ref{Lem:EsubK}(iii), $\cx \left[ \cE_{\KK_1 \cap \KK_2} \right] =\cx \left[ \cE_{\KK_1} \cap \cE_{\KK_2} \right] =\cx \left[ \cE_{\KK_1} \right] \cap \cx \left[ \cE_{\KK_2} \right]$ is a Bose-Mesner algebra. 
\end{proof}

We now introduce some additional notation which will be useful in discussing Galois fusions and will also be used in the following sections. 

\begin{definition}
    Let $\cQ_0=\{0\},\cQ_1,\ldots,\cQ_e$ be the orbits of the action of $\Sigma_\KK$ on $\{0,1,\ldots,d\}$. We define a map $\iota:\{0, \ldots, d\} \to \{0, \ldots, e\}$ by $\iota(i) = j$ for $i \in \mathcal{Q}_j$. Given this partition of $\{0,\ldots,d\}$, we shall set $\overline{Q}=QO$ where $O$ is the $(d+1)\times (e+1)$ matrix with $(i,j)$-entry equal to one if $i\in \cQ_j$ and zero otherwise. Note that we will use $\iota_{\mathscr{X}}$ and $\overline{Q}_{\mathscr{X}}$ when such clarification is needed.
\end{definition}

The map $\iota$ is clearly surjective with $\iota^{-1}(j) = \mathcal{Q}_j$. It is injective precisely when $\FF=\KK$. Henceforth, we shall restrict $F_j$ to not simply any sums of the minimal idempotents, but to those arising from the Galois orbits with respect to some subfield $\KK$; henceforth
\[
F_j = \sum_{i \in \iota^{-1}(j)}E_i ~. 
\]

\begin{example}
\label{Ex:8vertex1}
Consider the scheme $\mathscr{X}$ built from Cayley graphs on $\ints_4 \times \ints_2$  specified by its relation matrix $\sum i A_i$ and the relation matrix of its Galois fusion $\mathscr{X}_{\downarrow_\mathbb{Q}}$ together with their corresponding second eigenmatrices $Q_\mathscr{X}$ and $Q_\mathscr{\mathscr{\mathscr{X}_{\downarrow_\mathbb{Q}}}}$.
Here we see this last matrix is obtained from $\overline{Q}_\mathscr{X}$ by removal of repeated rows.
\[
\mathscr{X}:
\begin{bmatrix}
  0 &  1 &  2 &  3 &  4 &  4 &  4 &  4 \\
  1 &  0 &  3 &  2 &  4 &  4 &  4 &  4 \\
  3 &  2 &  0 &  1 &  4 &  4 &  4 &  4 \\
  2 &  3 &  1 &  0 &  4 &  4 &  4 &  4 \\
  4 &  4 &  4 &  4 &  0 &  1 &  2 &  3 \\
  4 &  4 &  4 &  4 &  1 &  0 &  3 &  2 \\
  4 &  4 &  4 &  4 &  3 &  2 &  0 &  1 \\
  4 &  4 &  4 &  4 &  2 &  3 &  1 &  0 
\end{bmatrix},
\quad
\mathscr{\mathscr{X}_{\downarrow\mathbb{Q}}}:
\begin{bmatrix}
   0 &  1 &  2 &  2 &  3 &  3 &  3 &  3 \\
    1 &  0 &  2 &  2 &  3 &  3 &  3 &  3 \\
    2 &  2 &  0 &  1 &  3 &  3 &  3 &  3 \\
    2 &  2 &  1 &  0 &  3 &  3 &  3 &  3 \\
    3 &  3 &  3 &  3 &  0 &  1 &  2 &  2 \\
    3 &  3 &  3 &  3 &  1 &  0 &  2 &  2 \\
    3 &  3 &  3 &  3 &  2 &  2 &  0 &  1 \\
    3 &  3 &  3 &  3 &  2 &  2 &  1 &  0 
\end{bmatrix}
\]
\[
Q_\mathscr{X}=
\left[ \begin{array}{crrrr}
        1 &        1 &        2 &        2 &        2 \\
          1 &        1 &       -2 &       -2 &        2 \\
          1 &        1 &  -2i &   2i &       -2 \\
          1 &        1 &   2i &  -2i &       -2 \\
          1 &       \!\!-1 &        0 &        0 &        0 
\end{array} \right],
\ 
\overline{Q}_\mathscr{X}= \left[
\begin{array}{rrrr}
        1 &        1 &        4 &               2 \\
        1 &        1 &       \!-4 &               2 \\
        1 &        1 &  0 &        \!-2 \\
        1 &        1 &   0 &         \!-2 \\
        1 &       \!-1 &              0 &        0 
\end{array} \right],
\ 
Q_\mathscr{\mathscr{\mathscr{X}_{\downarrow_\mathbb{Q}}}}= \left[
\begin{array}{rrrr}
    1&   1&   2&   4 \\
     1&  \!\!-1&   0&   0 \\
     1&   1&  \!\!-2&   0 \\
     1&   1&   2&  \!\!-4 
  \end{array}\right]
\]
The splitting field of $\mathscr{X}$ is $\mathbb{Q}(i)$ and the permutation action of $\Gal(\mathbb{Q}(i)/\mathbb{Q})$ on the idempotents gives $\langle (2,3) \rangle$. Hence $F_0 = E_0$, $F_1 = E_1$, $F_2 = E_2 + E_3$, $F_3 = E_4$ and $\iota: \{0, 1, 2, 3, 4\} \to \{0, 1, 2, 3\}$ by $\iota(0) = 0$, $\iota(1) = 1$, $\iota(2) = 2$, $\iota(3) = 2$ and $\iota(4) = 3$. This is described by $\overline{Q}$ where columns 2 and 3 of $Q$ have been replaced by their sum. Repeated rows indicate that relations 2 and 3 fuse, and by the Bannai-Muzychuk Criterion, we have a valid fusion $\mathscr{\mathscr{X}_{\downarrow_\mathbb{Q}}}$ with 2nd eigenmatrix $Q_\mathscr{\mathscr{\mathscr{X}_{\downarrow_\mathbb{Q}}}}$.
\end{example}

Note that not every scheme has property $\PropM{\rats}$ as the following example shows.

\begin{example}
\label{Ex:CoxeterFails}
The Coxeter graph (considered as a metric association scheme) has second eigenmatrix $Q$, below, 
and corresponding matrix $\overline{Q}$:
    \[
Q =
\begin{bmatrix}
  1 & 8 & 6 & 7 & 6 \\
    1 & 16/3 & -2+2\sqrt{2} & -7/3 & -2-2\sqrt{2} \\
    1 &  4/3 & -2\sqrt{2} & -7/3 & 2\sqrt{2} \\
    1 & -4/3 & -1 &  7/3 &  -1\\
    1 & -8/3 & 2+\sqrt{2} & -7/3 & 2-\sqrt{2} 
  \end{bmatrix},
  \quad
 \overline{Q} =
\begin{bmatrix}
  1 & 8 & 12 & 7 \\
    1 & 16/3 & -4 & -7/3  \\
    1 & \phantom{-}4/3 & \phantom{-}0 & -7/3 \\
    1 & -4/3 & -2 &  \phantom{-}7/3 \\
    1 & -8/3 & \phantom{-}4 & -7/3 
  \end{bmatrix}
\]
That is, we have summed the columns of $Q$ according to the Galois orbits to obtain $\overline{Q}$ which clearly fails the Bannai-Muzychuk criterion. Hence the Coxeter graph does not satisfy property $\PropM{\rats}$. This means that the Coxeter graph does not have a  Galois fusion over any proper subfield.
\end{example}

\begin{problem}\label{Prob:Property}
For any association scheme $\mathscr{X}$, when does Property $\PropM{\KK}$ hold for $\rats \subseteq \KK \subseteq \FF$? In particular, when does Property $\PropM{\rats}$ hold? That is, what are the Galois fusions of $\mathscr{X}$ with respect to subfields of $\FF$?
\end{problem}

\section{Delsarte designs and irrational eigenvalues}
\label{Sec:irrational}

We continue with the notation used in Section \ref{Sec:background} and Section \ref{Sec:fusion}.

For $T\subseteq \{1,\ldots,d\}$ a subset $\emptyset \subset C \subseteq X$ is a \emph{Delsarte $T$-design} if  the characteristic vector $\bx=\bx_C$ of $C$ (with $\bx_a=1$ if $a\in C$ and $\bx_a=0$ otherwise) satisfies $E_j \bx = 0$ for all $j\in T$. Given $C$, the largest set  of indices for which $C$ is a design is $T(C)=\{ j \mid E_j\bx=0\}$.  The \emph{inner distribution} $a=a(C)=[a_0,\ldots,a_d]$ and \emph{dual distribution} (or \emph{MacWilliams transform}) $b=b(C)=[b_0,\ldots,b_d]$ defined by
$$\hfill  a_i = \frac{1}{|C|} \bx^\top A_i \bx  \hspace{1in}  b_k = \frac{|X|}{|C|} \bx^\top E_j \bx \hfill $$
are related by $b=aQ$ and we know $a\ge 0$, $b\ge 0$ (see, e.g., \cite[Sec.~2.5]{bcn}).  Evidently, $j\in T(C) \Leftrightarrow b_j=0$.

We may also consider weighted Delsarte $T$-designs where each vertex in $C$ is assigned a weight in $\rats$. If the weights are in $\ints_+$ then they may be considered as ``repeated points'' or as a multiset. Any rationally weighted Delsarte T-design corresponds to a vector $\bx$ with rational entries. As for non-weighted Delsarte $T$-designs, we have the same properties for $T(C)$ and the MacWilliams transform, after suitably adjusting the inner distribution to
\[
a_i = \frac{\bx^\top A_i \bx}{\bx^\top \bx}.
\]

We are typically interested primarily in 01-vectors characteristic vectors. However, even when considering weighted Delsarte designs the characteristic vector has rational entries. This means that we have $\bx_C = \sum_{i \not \in T(C)} E_j \bx_C$ in $\rats^{X}$. Hence if $F_0, \ldots, F_e$ are the minimal elements of $\mathcal{E}_\rats$, then $\bx_C$ must be expressible as
$\bx_C = \sum_{j \in S} F_j\bx_C$ 
for some $S \subseteq \{0, \ldots, e\}$. We will develop this idea in what follows, so we restrict ourselves to $\KK=\rats$. We note, however, that the results generalise to other choices of $\KK$.

\begin{theorem}
\label{Thm:rational}
Let $(X,\cR)$ be an association scheme and let $C\subseteq X$ be a nonempty (possibly weighted) subset of the vertices with inner distribution $a$ and characteristic vector $\bx$.
\begin{enumerate}
    \item[(i)] If $j\in T(C)$ and $\sigma\in \Sigma_\rats$, then $j^\sigma \in T(C)$.  Hence $T(C)$ is a union of orbits $\cQ_j \in \{ \cQ_1,\ldots, \cQ_e\}$.
    \item[(ii)] Let $\iota(i)=\iota(j)$. If $aQ_i = 0$ then $aQ_j = 0$. Equivalently, if $E_i \bx = 0$ then $E_j \bx = 0$.
    \item[(iii)] If $aQ_i = 0$ then $a\overline{Q}_{\iota(i)}=0$. Equivalently, if $E_i \bx = 0$ then $F_{\iota(i)} \bx = 0$.
\end{enumerate}
\end{theorem}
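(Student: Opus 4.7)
The proof rests on two standard observations together with the elementary fact that a Galois automorphism commutes with entrywise matrix operations. First, since $C$ has weights in $\rats$, we have $\bx\in\rats^X$, so $\bx^{\bm{\sigma}}=\bx$ for every $\bm{\sigma}\in\Gal(\FF/\rats)$; and since each $A_i$ is rational, the inner distribution $a$ (as given by $a_i=\bx^\top A_i\bx / \bx^\top\bx$) also has rational entries. Second, for each $i$ we have the equivalence
\[
E_i\bx=0 \ \Longleftrightarrow \ aQ_i=0.
\]
This follows from $b=aQ$ and $b_i=\frac{|X|}{\bx^\top\bx}\bx^\top E_i\bx$, together with the fact that $E_i$ is Hermitian positive semidefinite, so $\bx^\top E_i\bx=0$ forces $E_i\bx=0$.

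Granted these, part (i) is immediate. Suppose $j\in T(C)$, so $E_j\bx=0$, and let $\bm{\sigma}\in\Gal(\FF/\rats)$ correspond to $\sigma\in\Sigma_\rats$. Applying $\bm{\sigma}$ entrywise to $E_j\bx=0$ and using $(E_j\bx)^{\bm{\sigma}}=E_j^{\bm{\sigma}}\bx^{\bm{\sigma}}=E_{j^\sigma}\bx$ (the first equality because Galois automorphisms preserve sums and products of entries; the second because $E_j^{\bm{\sigma}}=E_{j^\sigma}$ by definition and $\bx^{\bm{\sigma}}=\bx$), we obtain $E_{j^\sigma}\bx=0$, i.e. $j^\sigma\in T(C)$. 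Hence $T(C)$ is closed under $\Sigma_\rats$ and is therefore a union of orbits $\cQ_\ell$.

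For (ii), if $\iota(i)=\iota(j)$ then $i$ and $j$ lie in the same $\Sigma_\rats$-orbit, so there exists $\sigma\in\Sigma_\rats$ with $i^\sigma=j$; part (i) together with the equivalence $aQ_k=0\Leftrightarrow E_k\bx=0$ recalled above then gives $aQ_i=0\Leftrightarrow aQ_j=0$ and $E_i\bx=0\Leftrightarrow E_j\bx=0$. For (iii), simply observe
\[
F_{\iota(i)}\bx=\sum_{k\in\iota^{-1}(\iota(i))} E_k\bx,\qquad a\overline{Q}_{\iota(i)}=\sum_{k\in\iota^{-1}(\iota(i))}aQ_k,
\]
and apply (ii) term by term: if $E_i\bx=0$, then every summand in the first sum vanishes, and likewise every summand in the second. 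There is no substantial obstacle here; the whole theorem is the compatibility of the Delsarte design condition with Galois action, and the only point requiring care is the rationality of $\bx$ (and hence of $a$), which is what lets us move $\bm{\sigma}$ across in the key line.
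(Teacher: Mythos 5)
Your proof is correct and takes essentially the same approach as the paper: both hinge on applying a Galois automorphism $\bm{\sigma}$ to a vanishing quantity and using rationality of $\bx$ (equivalently of $a$) together with $E_j^{\bm{\sigma}}=E_{j^\sigma}$ to conclude $E_{j^\sigma}\bx=0$. The only cosmetic difference is that the paper applies $\bm{\sigma}$ to $b=aQ$ and permutes the columns of $Q$, whereas you apply it to $E_j\bx=0$ directly; these are the two equivalent formulations already present in the statement, and your extra remark that $\bx^\top E_i\bx=0$ forces $E_i\bx=0$ (via positive semidefiniteness of the Hermitian idempotent $E_i$) is a correct and welcome detail.
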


\begin{proof}
(i) For $\bm{\sigma} \in \Gal(\FF/\rats)$ and corresponding $\sigma\in \Sigma_\rats \subseteq \Sym(\{0,\ldots,d\})$, we have 
$$ (aQ)^{\bm{\sigma}}= a(Q)^{\bm{\sigma}}= [b_0,b_{1^\sigma},\ldots,b_{d^\sigma}] $$
so that $b_j=0$ if and only if $b_{j'}=0$ for all $j'$ in the orbit $\cQ_\ell$ containing $j$. Parts
(ii) and (iii) follow immediately.
\end{proof}

Theorem \ref{Thm:rational} has important implications in practice, since it means that the study of Delsarte designs in an association scheme with irrational eigenvalues is essentially equivalent to the study of 
subsets satisfying $F_j \bx=0$ for all $j$ in some reduced set $T' \subseteq \{1,\ldots,e\}$. In many cases, this greatly reduces the complexity of a search, for example when using linear programming.

\begin{example}
Recall the Coxeter graph from Example \ref{Ex:CoxeterFails}. It can be viewed as being obtained from the odd graph $O_4$ by deleting the vertices of a Fano plane. 
The Galois merging of idempotents for the Coxeter graph is given by $\iota:\{0,1,2,3,4\} \to \{0,1,2,3\}$ where $\iota(0)=0$, $\iota(1)=1$, $\iota(2)=2$, $\iota(3)=3$, $\iota(4)=2$. Hence, for example, any $\{1,2\}$-design or $\{1,4\}$-design of $\mathscr{X}$ is in fact a 
$\{1,2,4\}$-design, since $\iota(\{1,2,4\}) = \iota(\{1,2\})= \iota(\{1,4\}) = \{1,2\}$. For instance, a
 second Fano plane taken from the remaining vertices gives a $\{1, 2, 4\}$-design with inner distribuition $a = [1, 0, 0, 6, 0]$. Note that
$aQ = [7, 0, 0, 21, 0 ]$ and $a\overline{Q} =[7, 0, 0, 21]$.
\end{example}

We have a stronger result than Theorem \ref{Thm:inclusion} when property $\PropM{\mathbb{Q}}$ holds.

\begin{theorem}\label{Thm:inclusion}
Let $\mathscr{X}$ be a $d$-class scheme satisfying property $\PropM{\mathbb{Q}}$ and let $T \subset \{1, \ldots, d\}$. Then $C \subset X$ is a $T$-design of $\mathscr{X}$ if and only if it is a $\iota(T)$-design of $\mathscr{X}_{\downarrow_\mathbb{Q}}$. \hfill $\Box$
\end{theorem}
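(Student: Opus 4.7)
The plan is to argue both directions using essentially just the definition of the primitive idempotents $F_0,\ldots,F_e$ of $\mathscr{X}_{\downarrow_{\rats}}$ together with orthogonality of the primitive idempotents $E_0,\ldots,E_d$ of $\mathscr{X}$. Write $\bx=\bx_C$ and recall from Lemma \ref{Lem:GaloisMergings} that
\[
F_\ell \;=\; \sum_{j\in \iota^{-1}(\ell)} E_j \qquad (0\le \ell \le e),
\]
so that, under Property $\PropM{\rats}$, $C$ is a $\iota(T)$-design of $\mathscr{X}_{\downarrow_{\rats}}$ precisely when $F_\ell \bx=0$ for every $\ell \in \iota(T)$, while $C$ is a $T$-design of $\mathscr{X}$ precisely when $E_j \bx = 0$ for every $j \in T$.

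The forward direction is essentially a restatement of Theorem \ref{Thm:rational}(iii). If $C$ is a $T$-design of $\mathscr{X}$, then $E_j \bx = 0$ for every $j\in T$, and Theorem \ref{Thm:rational}(iii) yields $F_{\iota(j)}\bx = 0$ for every such $j$. As $\ell$ ranges over $\iota(T)$, each such $\ell$ has the form $\iota(j)$ for some $j \in T$, so $F_\ell \bx=0$ throughout $\iota(T)$ and $C$ is a $\iota(T)$-design of $\mathscr{X}_{\downarrow_{\rats}}$.

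For the converse, suppose $F_\ell \bx=0$ for every $\ell \in \iota(T)$. Fix $j \in T$ and set $\ell=\iota(j)$, so that $j\in \iota^{-1}(\ell)$. Left-multiplying $F_\ell \bx=0$ by $E_j$ and using $E_j E_j = E_j$ together with $E_j E_{j'} = 0$ for $j'\neq j$ collapses the sum to $E_j \bx = E_j F_\ell \bx = 0$. Hence $E_j \bx = 0$ for every $j \in T$, which is precisely the statement that $C$ is a $T$-design of $\mathscr{X}$.

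There is no real obstacle here; the only subtle point is that in the backward direction one must extract information about an individual $E_j$ from the single equation $F_\ell \bx = 0$, and this is handled cleanly by orthogonality of the primitive idempotents. (One could equivalently note that each $E_j$ is positive semidefinite Hermitian, so $F_\ell \bx = 0$ forces $\bx^{\dagger} E_j \bx = 0$ and hence $E_j \bx =0$; but the multiplication argument is cleaner.) The statement thus reduces to an essentially formal consequence of Lemma \ref{Lem:GaloisMergings} and Theorem \ref{Thm:rational}.
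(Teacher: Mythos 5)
Your proof is correct and is exactly the argument the paper intends (the theorem is stated with the proof omitted as immediate): the forward direction is Theorem \ref{Thm:rational}(iii) applied to each $j\in T$, and the converse follows by hitting $F_{\iota(j)}\bx=0$ with $E_j$ and using $E_jE_{j'}=\delta_{jj'}E_j$ to recover $E_j\bx=0$. Nothing is missing; Property $\PropM{\rats}$ enters only to guarantee that $\mathscr{X}_{\downarrow_{\rats}}$ is an association scheme with primitive idempotents $F_0,\ldots,F_e$, so that the phrase ``$\iota(T)$-design of $\mathscr{X}_{\downarrow_{\rats}}$'' is meaningful.
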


This says in essence that it is sufficient to consider the $T$-designs of the $\mathscr{X}_{\downarrow_\mathbb{Q}}$ when studying $T$-designs of $\mathscr{X}$. This is helpful in particular when showing non-existence or when classifying $T$-designs. Note however, that they may have different combinatorial properties.
This is essentially used in an ad hoc manner in \cite{synch}.

\begin{example}
\label{Ex:8vertexDesign}
Recall $\mathscr{X}$ and $\mathscr{\mathscr{X}_{\downarrow_\mathbb{Q}}}$ from Example \ref{Ex:8vertex1}. Let $C = \{  1, 2, 5, 6 \} \subset X = \{1, \ldots, 8\}$. Now $C$ is a $\{1, 2, 3\}$-design in $\mathscr{X}$ with inner distribution $[1, 1, 0, 0, 2]$. Moreover $C$ is a $\{\iota(1), \iota(2), \iota(3)\} = \{1, 2\}$-design in $\mathscr{\mathscr{X}_{\downarrow_\mathbb{Q}}}$. With respect to $\mathscr{\mathscr{X}_{\downarrow_\mathbb{Q}}}$
the inner distribution of $C$ is $[ 1, 1, 0, 2 ]$.
We note that equivalently $[1, 1, 0, 0, 2]\overline{Q}_\mathscr{\mathscr{X}} = [ 1, 1, 0, 2 ] Q_{\mathscr{X}_{\downarrow_\mathbb{Q}}}$ ~.
\end{example}

Theorem \ref{Thm:inclusion} establishes a correspondence between the designs of $\mathscr{X}$ and $\mathscr{X}_{\downarrow_\rats}$ which has curious implications for pairs of non-isomorphic schemes having isomorphic Galois fusions.
We call $\mathscr{X} =(X, \mathcal{R})$ isomorphic to $\mathscr{Y}=(Y, \mathcal{S})$, $\cS=\{S_0,\ldots,S_e\}$, 
if there is a bijection $\sigma_1$ from $X$ to $Y$ and a bijection $\sigma_2$ from 
$\{0, \ldots, d\}$ to $\{0, \ldots, e\}$ such that $\{\left(\sigma_1(x), \sigma_1(y)\right) \mid (x,y) \in R_i\} = S_{\sigma_2(i)}$ 
in which case we write $\mathscr{X}^{(\sigma_1, \sigma_2)} = \mathscr{Y}$. 
For some $T \subseteq \{1, \ldots, d\}$ we define 
$\mathfrak{D}_T(\mathscr{X}) = \{ C \subset X : C \text{ is a $T$-Design of } \mathscr{X}\}$, 
that is, $\mathfrak{D}_T(\mathscr{X})$ is the collection of all $T$-designs of $\mathscr{X}$.

\begin{theorem} 
\label{Thm:BijectiveDesignCollections}
Let $\mathscr{X}$ and $\mathscr{Y}$ respectively be $d$-class and $e$-class schemes that satisfy property $\PropM{\rats}$, and let $T \subset \{1, \ldots, d\}$ and $T' \subset \{1, \ldots, e\}$. If $\mathscr{X}_{\downarrow_\mathbb{Q}}^{(\sigma_1, \sigma_2)} = \mathscr{Y}_{\downarrow_\mathbb{Q}}$ and $\iota_\mathscr{X}(T) = \iota_\mathscr{Y}(T')$ then $C \mapsto C^{\sigma_1}$ is a bijection from $\mathfrak{D}_T(\mathscr{X})$ to $\mathfrak{D}_{T'}(\mathscr{Y})$. \hfill $\Box$
\end{theorem}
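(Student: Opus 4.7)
The plan is to reduce the statement to an elementary transport-of-structure argument once we have passed from $\mathscr{X},\mathscr{Y}$ to their rational Galois fusions. First I would apply Theorem \ref{Thm:inclusion} to each scheme separately to get the two equivalences
\begin{equation*}
C \in \mathfrak{D}_T(\mathscr{X}) \iff C \in \mathfrak{D}_{\iota_{\mathscr{X}}(T)}(\mathscr{X}_{\downarrow_\rats}),
\qquad
D \in \mathfrak{D}_{T'}(\mathscr{Y}) \iff D \in \mathfrak{D}_{\iota_{\mathscr{Y}}(T')}(\mathscr{Y}_{\downarrow_\rats}),
\end{equation*}
which immediately reduces the task to producing a bijection $\mathfrak{D}_{\iota_{\mathscr{X}}(T)}(\mathscr{X}_{\downarrow_\rats}) \to \mathfrak{D}_{\iota_{\mathscr{Y}}(T')}(\mathscr{Y}_{\downarrow_\rats})$ via $C\mapsto C^{\sigma_1}$.

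Next I would unpack the scheme isomorphism $(\sigma_1,\sigma_2)$ at the level of Bose-Mesner algebras. Let $P_{\sigma_1}$ denote the $X\times Y$ permutation matrix of $\sigma_1$. Conjugation by $P_{\sigma_1}$ is a ring isomorphism between the two (rational) Bose-Mesner algebras sending each $A_i^{\mathscr{X}_{\downarrow_\rats}}$ to $A_{\sigma_2(i)}^{\mathscr{Y}_{\downarrow_\rats}}$ and, since primitive idempotents are algebraic invariants of the algebra, permuting them via some bijection $\tau$: if $F_0^{\mathscr{X}},\ldots,F_e^{\mathscr{X}}$ and $F_0^{\mathscr{Y}},\ldots,F_e^{\mathscr{Y}}$ are the primitive idempotents of the two fusions, then $P_{\sigma_1}^\top F_j^{\mathscr{X}} P_{\sigma_1}=F_{\tau(j)}^{\mathscr{Y}}$. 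The hypothesis $\iota_{\mathscr{X}}(T)=\iota_{\mathscr{Y}}(T')$ is to be read via this identification, i.e.\ as $\tau(\iota_{\mathscr{X}}(T))=\iota_{\mathscr{Y}}(T')$.

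The remainder is a one-line verification. For $C\subseteq X$ with characteristic vector $\bx_C$, the relation $\bx_{C^{\sigma_1}}=P_{\sigma_1}^\top \bx_C$ gives
\begin{equation*}
F_{\tau(j)}^{\mathscr{Y}}\, \bx_{C^{\sigma_1}} = \bigl(P_{\sigma_1}^\top F_j^{\mathscr{X}} P_{\sigma_1}\bigr)\, P_{\sigma_1}^\top \bx_C = P_{\sigma_1}^\top F_j^{\mathscr{X}}\, \bx_C,
\end{equation*}
where we have used $P_{\sigma_1}P_{\sigma_1}^\top = I_X$. Since $P_{\sigma_1}^\top$ is invertible, the left-hand side vanishes if and only if $F_j^{\mathscr{X}}\bx_C=0$. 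Letting $j$ range over $\iota_{\mathscr{X}}(T)$ and invoking $\tau(\iota_{\mathscr{X}}(T))=\iota_{\mathscr{Y}}(T')$, we conclude that $C$ is an $\iota_{\mathscr{X}}(T)$-design of $\mathscr{X}_{\downarrow_\rats}$ if and only if $C^{\sigma_1}$ is an $\iota_{\mathscr{Y}}(T')$-design of $\mathscr{Y}_{\downarrow_\rats}$. The inverse bijection is manifestly $D\mapsto D^{\sigma_1^{-1}}$.

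The main obstacle I foresee is notational bookkeeping: one must carefully track how the adjacency-matrix relabeling $\sigma_2$ induces the idempotent permutation $\tau$, and confirm that $\tau$ respects the partition of idempotent indices into $\Sigma_\rats$-orbits so that it descends to a well-defined map between the ranges of $\iota_{\mathscr{X}}$ and $\iota_{\mathscr{Y}}$. This is essentially immediate from the fact that the Galois action on primitive idempotents is an intrinsic invariant of a scheme, but it is where all the indexing subtleties reside and is the one place the hypothesis $\iota_{\mathscr{X}}(T)=\iota_{\mathscr{Y}}(T')$ must be parsed with care.
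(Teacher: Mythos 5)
Your proposal is correct and follows exactly the route the paper intends (the paper omits the proof, ending the theorem with $\Box$): reduce via Theorem \ref{Thm:inclusion} to designs in the two rational Galois fusions, then transport characteristic vectors through the permutation matrix of $\sigma_1$, which conjugates the minimal idempotents $F_j$ of one fusion onto those of the other. Your explicit care with the induced idempotent permutation $\tau$ and the reading of the hypothesis $\iota_{\mathscr{X}}(T)=\iota_{\mathscr{Y}}(T')$ is, if anything, more precise than the paper's statement.
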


\begin{example}
Let $\mathscr{X}$ be as in Example \ref{Ex:8vertex1} and let $\mathscr{Y}$ be the association scheme on the same vertex set $\ints_4\times \ints_2$ defined by the following relation matrix $\sum i A_i$ and having second eigenmatrix $Q_\mathscr{Y}$ as follows:
\[
\mathscr{Y}:
\begin{bmatrix}
   0 &  1 &  2 &  2 &  3 &  3 &  4 &  4 \\
    1 &  0 &  2 &  2 &  3 &  3 &  4 &  4 \\
    2 &  2 &  0 &  1 &  4 &  4 &  3 &  3 \\
    2 &  2 &  1 &  0 &  4 &  4 &  3 &  3 \\
    4 &  4 &  3 &  3 &  0 &  1 &  2 &  2 \\
    4 &  4 &  3 &  3 &  1 &  0 &  2 &  2 \\
    3 &  3 &  4 &  4 &  2 &  2 &  0 &  1 \\
    3 &  3 &  4 &  4 &  2 &  2 &  1 &  0 
\end{bmatrix},
\quad
Q_\mathscr{Y}=
\left[ \begin{array}{rrrrr}
       1 &      1 &      1 &      1 &      4 \\
        1 &      1 &      1 &      1 &     \!\!-4 \\
        1 &     \!\!-1 &     \!\!-1 &      1 &      0 \\
        1 &  \!\!-i &   i &     \!\!-1 &      0 \\
        1 &   i &  \!\!-i &     \!\!-1 &      0 
\end{array} \right]
\]
Then $\mathscr{X}$ and $\mathscr{Y}$ are non-isomorphic but $\mathscr{X}_{\downarrow_\mathbb{Q}} = \mathscr{Y}_{\downarrow_\mathbb{Q}}$.
Now $\iota_\mathscr{Y}:\{0,1,2,3,4\} \to \{0,1,2,3\}$ by $\iota_\mathscr{Y}(0)=0$, $\iota_\mathscr{Y}(1)=1$, $\iota_\mathscr{Y}(2)=1$, $\iota_\mathscr{Y}(3)=2$, and $\iota_\mathscr{Y}(4)=3$. Consider $C$ from Example \ref{Ex:8vertexDesign} which is a $\{1,2,3\}$-design in $\mathscr{X}$ and a $\{1, 2\}$-design in $\mathscr{\mathscr{X}_{\downarrow_\mathbb{Q}}} = \mathscr{\mathscr{Y}_{\downarrow_\mathbb{Q}}}$. So $C$ is a $\iota_\mathscr{Y}^{-1}(1) \cup \iota_\mathscr{Y}^{-1}(2) = \{3, 4\}$-design of $\mathscr{Y}$. Note that $\mathscr{X}_{\downarrow_\mathbb{Q}}$ and $\mathscr{Y}_{\downarrow_\mathbb{Q}}$ are equal and not just isomorphic, so no permutation of the vertices need be applied to $C$ for it to be a design of $\mathscr{Y}$.
It is not hard for the interested reader to completely enumerate all $T$-designs for both $\mathscr{X}$ and $\mathscr{Y}$ by computer. Indeed, in light of Theorem \ref{Thm:BijectiveDesignCollections}, one need only enumerate them for $\mathscr{X}_{\downarrow_\mathbb{Q}}$.
\end{example}

\section{Conjugacy Class Association Schemes}
\label{Sec:conjschemes}

We begin this section with a brisk review of the general theory of conjugacy class association schemes. These are well-studied and serve as  fundamental examples in some standard texts; see 
\cite[p.~54]{banito}, 
\cite[pp.~230-1,264]{godsil}, 
\cite[p.~51]{BBIT}, but also 
\cite[pp.~19-21]{banito} and 
\cite[Sec.~2.9]{bcn}. 

Let $G$ be a finite group with decomposition $G = \cC_0\cup \cC_1 \cup \cdots \cup \cC_d$ into conjugacy classes where $\cC_0=\{1\}$. 
We build matrices in $\Mat_G(\cx)$ as follows.  For $0\le i\le d$, define $A_i$, the $i^{\rm th}$ adjacency matrix, as the 01-matrix in $\Mat_G(\cx)$ having  a one in position $(g,h)$ if $g^{-1}h\in \cC_i$. This is the adjacency matrix of the digraph with vertex set $G$ and arc set
$R_i = \left\{ (g,ga) \middle| g\in G, \ a\in \cC_i \right\}$. 
The Bose-Mesner algebra $\BMA$,  defined above as the vector space 
span of $\{A_0,\ldots,A_d\}$,  is also the center of the group algebra $\cx [\hat{G}]$ where $\hat{G}$ is the group of $v=|G|$ permutation matrices in $\Mat_G(\cx)$ given by the right regular representation $g\cdot x=xg^{-1}$; the permutation matrix $P_g \in \hat{G}$ has a one in position $(x,xg^{-1})$ for $x\in G$ and $P_g \be_x = \be_{xg}$ where $\be_{x} \in \cx^G$ is the standard basis vector corresponding to $x\in G$.

We have a partition $\cR=\{R_0,\ldots,R_d\}$ of $G\times G$ into binary relations.
It is well-known that $(G,\cR)$ is a commutative association scheme \cite[p.~20]{banito}; the scheme is symmetric precisely when all
conjugacy classes are inverse-closed. 
Each normal subgroup of $G$ determines an imprimitivity system of this association scheme; if $N \unlhd G$, then $N$ is expressible as a union of conjugacy classes, $\cC_0\cup \cdots \cup \cC_s$ say, and $(X,R_1\cup \cdots \cup R_s)$ is a disjoint union of $|G:N|$ complete graphs of size $|N|$. Moreover, every imprimitivity system of a conjugacy class scheme arises in this way: the component 
of the disconnected graph containing the identity is a union of conjugacy classes closed under multiplication.

There are exactly $d+1$ irreducible characters of $G$ including the trivial character $\chi_0:G \rightarrow \{1\}$. The \emph{character table} $\sT$ of $G$ has one row for each irreducible character and one column for each conjugacy class; 
in row $j$, column $i$, the value is $\chi_j(g)$ for $g\in \cC_i$.  We denote the basis of primitive idempotents of the 
Bose-Mesner algebra $\BMA$ by $\{E_0,\ldots,E_d\}$ and define the first and second \emph{eigenmatrices} $P$ and $Q$ by the equations (\ref{Eqn:PQ}) as usual.

Though it is well-known, we will show below using eigenvectors of the $A_i$ that  for the conjugacy class scheme of a finite group $G$, we have $Q_{ij} = \chi_j(1) \, \overline{\chi_j(g)}$ and $P_{ji} = |\cC_i|\chi_j(g)/\chi_j(1)$ ($g\in \cC_i$)\footnote{For this reason, many authors refer to $P$ as the ``character table'' of $G$ though some elementary scaling is involved in going from $\sT$ to $P$}. Denoting the corresponding irreducible representations by $\rho_0,\ldots,\rho_d$  
we have $\chi_j(g)=\tr \rho_j(g)$ and the 
right regular representation $\rho:G\rightarrow \hat{G}$ can be block diagonalised as a direct sum of irreducible representations \cite[Th.~18.10]{df}; in the notation of Bannai and Ito \cite[p.~17]{banito}, we have
$$ \rho \sim \rho_0 + f_1 \rho_1 + \cdots + f_d \rho_d $$
where, in the block diagonalisation of the right regular representation $\rho$, the $j^{\rm th}$ irreducible representation
$\rho_j$ appears with multiplicity equal to its degree $f_j$.

Our final goal in this section is to diagonalise the Bose-Mesner algebra of a conjugacy class association scheme.

For $0\le j\le d$, fix a choice of representation $\rho_j:G\rightarrow \Mat_{f_j}(\cx)$ with $\tr \rho_j(g)=\chi_j(g)$ for each $g\in G$. With $f=f_j$, consider the  bijection
$\upsilon:[f]\times [f] \rightarrow [f^2]$, vectorising each matrix $R=[r_{ij}] \in \Mat_f(\cx)$ into $[r_{11},r_{12},\ldots,r_{ff}]$. Construct a $|G|\times f^2$ matrix  $U_j$ with row $g$ equal to $u_g:=\upsilon\left( \rho(g)\right)$.

\begin{theorem}
\label{Thm:diagonalise}
The representations $\rho_0,\ldots,\rho_d$ give us $|G|$ linearly independent eigenvectors for $\BMA$. More precisely, the $\sum_{j=0}^d f_j^2$ columns of the partitioned matrix $\left[ U_0 | U_1 | \cdots | U_d \right]$ form a basis for $\cx^G$ where every basis vector is a common eigenvector for $A_0,\ldots,A_d$.
\end{theorem}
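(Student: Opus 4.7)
The plan is to verify directly that each column of each $U_j$ is a common eigenvector of all the adjacency matrices $A_0,\ldots,A_d$, and then to invoke Schur orthogonality to conclude linear independence; a dimension count completes the proof.

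First I would fix $j\in\{0,\ldots,d\}$ and $k,\ell\in[f_j]$, and consider the column $v=v^{(j)}_{k\ell}$ of $U_j$ whose $g$-entry equals $\rho_j(g)_{k\ell}$. Computing $(A_i v)_g$ from the definition $(A_i)_{g,h}=1\iff h\in g\cC_i$ and the multiplicativity $\rho_j(ga)=\rho_j(g)\rho_j(a)$ gives
\[
(A_i v)_g \;=\; \sum_{a\in \cC_i}\rho_j(ga)_{k\ell} \;=\; \sum_{m=1}^{f_j}\rho_j(g)_{km}\Bigl(\sum_{a\in \cC_i}\rho_j(a)\Bigr)_{m\ell}.
\]
The inner sum is the image under $\rho_j$ of the class sum of $\cC_i$, which lies in the centre of the group algebra. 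Since $\rho_j$ is irreducible, Schur's Lemma forces this image to be a scalar matrix $\mu_{ij}I$, and taking traces gives $\mu_{ij}=|\cC_i|\chi_j(g)/f_j$ for any $g\in \cC_i$. It follows that $A_iv=\mu_{ij}v$, with the eigenvalue depending on $j$ but not on $k$ or $\ell$. As a bonus, this yields the eigenvalue formula $P_{ji}=|\cC_i|\chi_j(g)/f_j$ announced just before the theorem.

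Next I would establish linear independence of the $\sum_{j=0}^d f_j^2$ columns of $[U_0\,|\,U_1\,|\,\cdots\,|\,U_d]$. Choosing each $\rho_j$ to be unitary (always possible over $\cx$ for a finite group), the Schur orthogonality relations for matrix entries state
\[
\sum_{g\in G}\overline{\rho_{j'}(g)_{k'\ell'}}\,\rho_j(g)_{k\ell} \;=\; \frac{|G|}{f_j}\,\delta_{jj'}\delta_{kk'}\delta_{\ell\ell'},
\]
so these columns are pairwise orthogonal in $\cx^G$ and each has nonzero norm. Hence they are linearly independent, and by the classical identity $\sum_{j=0}^d f_j^2=|G|$ they form a basis of $\cx^G$.

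The subtle point is the Schur's Lemma step: recognising that $\rho_j\bigl(\sum_{a\in \cC_i}a\bigr)$ is a \emph{scalar} matrix is what forces the same eigenvalue $\mu_{ij}$ to appear on all $f_j^2$ columns of $U_j$, so that the single representation $\rho_j$ produces $f_j^2$ simultaneous eigenvectors for \emph{every} $A_i$. Everything else is bookkeeping together with the standard Schur orthogonality relations.
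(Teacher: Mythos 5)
Your proof is correct. The eigenvector computation is essentially the paper's own argument: both reduce $A_i$ acting on the columns of $U_j$ to the image under $\rho_j$ of the class sum of $\cC_i$, invoke Schur's Lemma to see that this image is a scalar $\mu_{ij}I$, and recover $\mu_{ij}=|\cC_i|\chi_j(g)/f_j$ by taking traces. Where you genuinely diverge is the linear independence step. The paper argues in two stages: Wedderburn's theorem (surjectivity of the extended map $\rho_j:\cx[G]\to\Mat_{f_j}(\cx)$) gives independence of the $f_j^2$ columns \emph{within} each block $U_j$, and then columns coming from distinct irreducibles are separated because they are eigenvectors for distinct columns of $Q$, i.e.\ they lie in distinct common eigenspaces of the commuting normal matrices $A_i$. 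You instead invoke the Schur orthogonality relations for matrix coefficients, which handle all $\sum_j f_j^2$ columns uniformly and give the stronger conclusion that they are pairwise orthogonal. Your route is arguably cleaner, but it carries one small obligation the paper's route avoids: the theorem is stated for an \emph{arbitrary} fixed choice of each $\rho_j$, while the entrywise orthogonality relations in the form you quote require the $\rho_j$ to be unitary. This is easily patched --- replacing $\rho_j$ by an equivalent representation $S^{-1}\rho_j S$ changes the columns of $U_j$ by an invertible linear substitution (each entry of $S^{-1}\rho_j(g)S$ is a fixed linear combination of the entries of $\rho_j(g)$, and conversely), so the column space and the independence of the columns are unaffected --- but the remark should be made explicit. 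The paper's Wedderburn argument, by contrast, works verbatim for any choice of representative.
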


\begin{proof}
Let $j$ be given, $0\le j\le d$, and write simply $U=U_j$ as constructed above.
We will show that, for each $i$, $0\le i\le d$, 
$$A_i U = \frac{ |\cC_i| \chi_j(g_i)}{f_j} U$$
where $g_i\in \cC_i$.
Let $C = \sum_{x\in \cC_i} \rho(x)$. Then for any $g\in G$, $\rho(g)^{-1} C\rho(g) = C$ so that $C\rho(g) = \rho(g) C$. By Schur's Lemma \cite[Prop.~4, 2.2]{serre}, \cite[Thm.~1.4]{ledermann}, 
$C=\theta I_{f_j}$ 
for some $\theta$. Thus $\sum_{x\in \cC_i} \rho(gx)= \rho(g) C = \theta \rho(g)$ and row $g$ of $A_iU$ is 
\[
     \sum_{x\in \cC_i} u_{gx} =\upsilon\left( \sum_{x\in \cC_i} \rho({gx}) \right) = \theta u_g. 
\]
In other words $A_iU=\theta U$ for some $\theta \in \cx$. Taking the trace of both sides of the equation $\sum_{x\in \cC_i} \rho(x) = \theta I$, we  find $ |\cC_i| \chi_j(g_i) = \theta f_j$.   

By Wedderburn's structure theorem for semisimple algebras (see also \cite[p48]{serre}), the image $\{ \rho(g) \mid g\in G\}$ is full-dimensional in $\Mat_f(\cx)$. In fact, extending $\rho$ linearly to the full group algebra, we have that $\rho: \cx[G] \rightarrow \Mat_{f_j}(\cx)$ is surjective.
So the $f_j^2$ columns of $U$
are linearly independent. Distinct irreducible representations $\rho_j,\rho_k$ of $G$ correspond to orthogonal columns of the matrix $Q$ so these $f_j^2$ vectors are orthogonal to the eigenvectors obtained in this same manner from the remaining
$d$ irreducible representations of $G$. So we have $\sum_{j=0}^d f_j^2$ linearly independent eigenvectors for the Bose-Mesner algebra. We then use the standard identity $\sum_{j=0}^d f_j^2=|G|$  (see, e.g., \cite[Theorem 18.10(3)]{df}) to finish the proof.  
\end{proof}

The first and second eigenmatrices of the conjugacy class association scheme $P=[P_{ji}]$ and $Q=[Q_{ij}]$  defined by (\ref{Eqn:PQ}) above satisfy standard first and second orthogonality relations for association schemes (\ref{Eqn:orthog}) where $v_i=|\cC_i|$, but in this case, of course, these reduce to the classical orthogonality relations for group characters \cite[Thm.\ I.4.4, I.4.9]{banito}, \cite[Sec.~2.3]{serre}. Our construction of matrix $U$ above shows $m_j=f_j^2$. So the second orthogonality relation gives us $Q_{ij} = f_j \overline{\chi_j(x_i)}$.   In summary (cf. \cite[Thm.\ II.7.2]{banito}, with $g_i$ a representative of $\cC_i$, 
\begin{equation}
\label{Eq:groupPQ}
 P_{ji} =  \frac{1}{f_j} |\cC_i| \chi_j(g_i), \hspace{1in} Q_{ij} = f_j \ \overline{\chi_j(g_i)}.
\end{equation}

The Krein parameters of the conjugacy class scheme are closely related to the decomposition of Kronecker products of irreducible representations into irreducibles.  If $f_i$ denotes the degree of $\rho_i$ and 
\[ 
\rho_i \otimes \rho_j \sim  r_{ij}^0 \rho_0 + \cdots + r_{ij}^d \rho_d ,
\]
then $\chi_i \circ \chi_j = \sum_{k=0}^d r_{ij}^k \chi_k$ and, after scaling each column by its degree and using \cite[Lemma~2.3.1(vii)]{bcn},
we find  $q_{ij}^k = \frac{f_if_j}{f_k} r_{ij}^k$. 

The conjugacy classes of size one are precisely $\left\{ \{a\} \middle| a\in Z(G) \right\}$ and the eigenspaces of dimension one correspond to the elements of the abelianisation $G'=G/[G,G]$ of $G$.

The intersection numbers $p_{ij}^k$ are given by
$$ p_{ij}^k = \left| \cC_i \cap z\cC_j^{-1} \right|$$
where $z \in \cC_k$ and $\cC_j^{-1} = \{ h^{-1} \mid h\in \cC_j\}$. To see this, consider any $x,y\in G$ with $x^{-1}y\in \cC_k$; find $g\in G$
with $g^{-1} x^{-1} y g = z$ so that $xg^{-1} \left( \cC_i \cap R_{j'}(z) \right) g = R_i(x) \cap R_{j'}(y)$.

\subsection{Galois group and rational fusions}

From here forward let $\zeta_\ell$ denote a primitive (complex)  $\ell^{\rm th}$ root of unity. 

The splitting field of the finite group $G$ is defined to be the extension of $\rats$ by the entries of its character table. This then agrees with the concept of the splitting field \cite{munemasa} of the conjugacy class association scheme by (\ref{Eq:groupPQ}). Let $\FF$ denote the splitting field of the conjugacy class scheme of $G$. As discussed in Section \ref{Subsec:galois},  each $\bm{\sigma}\in \Gal(\FF/\rats)$ acts on the set of primitive idempotents $\{E_0,\ldots,E_d\}$; to $\bm{\sigma}$, associate
the permutation  $\sigma \in \Sigma_\rats$ 
defined by $E_{j^\sigma}=\left( E_j\right)^{\bm{\sigma}}$ where the action of $\bm{\sigma}$ on matrices is entry by entry.

Recall that $x,y\in G$ are \emph{rationally conjugate} if $g\langle x\rangle g^{-1} = \langle y \rangle$ for some $g\in G$. 
Clearly if $x$ and $y$ are conjugate in $G$, then $x$ and $y$  are rationally conjugate and any equivalence class $\cC$ of this ``rationally conjugate'' relation is a union of conjugacy classes of $G$ and so $R_{\cC}=\{ (x,y) \mid x^{-1}y\in \cC \}$ is expressible as a union of basis relations of the conjugacy class scheme.

The exponent of finite group $G$ divides $n=|G|$. If $G$ has exponent $k$, then, in any representation $\rho:G\rightarrow \GL_m(\cx)$, $\rho(g)^k=I$ for each $g\in G$ so the eigenvalues $\rho(g)$ are $k^{\rm th}$ roots of unity. Hence the trace $\chi(g) =\tr \rho(g)$ belongs to the cyclotomic extension $\rats( \zeta_k)$ and this is contained in $\rats( \zeta_n)$. In some cases (e.g., whenever all conjugacy classes are inverse-closed and the eigenvalues are all real), the splitting field of $G$ can be a proper subfield of  $\rats( \zeta_k)$. Motivated by this important class of examples, Simon Norton asked whether the splitting field of any commutative association scheme is contained in some cyclotomic extension of the rational numbers \cite[p183]{banito}.

While we choose here to cite a reference dealing directly with Schur rings, the connection between rational characters and rational conjugacy classes of $G$ is well studied (indeed, this connection is the origin of the term ``rational conjugacy class''). For instance, the fact that there are equally many rational conjugacy classes of $G$ as orbits of the full Galois group can be found in Serre, \cite[Chap.~13]{serre}.

\begin{theorem}[{Muzychuk, et al.\ \cite{MPC}}]  
\label{Thm:fusion}
The conjugacy class association scheme $(G,\cR)$ of any finite group $G$ satisfies Property $\PropM{\rats}$. The basis relations of the corresponding fusion scheme are  the relations $R_{\cC}$ as $\cC$ ranges over the rational conjugacy classes of $G$ as defined above.
\end{theorem}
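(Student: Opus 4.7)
The plan is to apply the Bannai-Muzychuk criterion (Proposition \ref{Prop:BannaiMuzychuk}(ii)) to the partition $\{\cQ_0,\ldots,\cQ_e\}$ of $\{0,\ldots,d\}$ given by the Galois orbits of $\Sigma_\rats$ on primitive idempotents (Lemma \ref{Lem:GaloisMergings}), and to identify the resulting dual partition on basis relations with the rational conjugacy classes of $G$.

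First I would recall the classical description of the Galois action on characters: if $n$ is the exponent of $G$ and $\bm{\sigma}\in \Gal(\FF/\rats)$ acts on $\rats(\zeta_n)$ by $\zeta_n\mapsto \zeta_n^s$ with $\gcd(s,n)=1$, then $\bm{\sigma}(\chi_j(g)) = \chi_j(g^s)$, because the eigenvalues of $\rho_j(g)$ are $n$-th roots of unity whose $s$-th powers are the eigenvalues of $\rho_j(g^s)$. In particular, the Galois action on the $E_j$ corresponds via (\ref{Eq:groupPQ}) to a permutation $\sigma \in \Sigma_\rats$ of irreducible characters, with $\chi_{j^\sigma}(g) = \chi_j(g^s)$; note that $f_{j^\sigma}=f_j$ since the degree is rational.

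Second, I would check that the rows of $\overline{Q} = QO$ are constant on rational conjugacy classes. Using $Q_{ij} = f_j\,\overline{\chi_j(g_i)}$,
\[
\overline{Q}_{i\ell} \;=\; \sum_{j\in \cQ_\ell} f_j\,\overline{\chi_j(g_i)}
\]
is a Galois-invariant (hence rational) sum; and if $g_{i'}$ is conjugate to $g_i^s$ for some $s$ coprime to $|g_i|$, then choosing $\bm{\sigma}$ realising $\zeta_n\mapsto \zeta_n^s$ permutes $\cQ_\ell$ to itself and replaces each $\chi_j(g_i)$ by $\chi_j(g_{i'})$, leaving the sum unchanged. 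Thus $\overline{Q}_{i\ell} = \overline{Q}_{i'\ell}$ whenever $g_i$ and $g_{i'}$ are rationally conjugate, so the number of distinct rows of $\overline{Q}$ is at most the number of rational conjugacy classes.

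Third, by Serre's theorem \cite[Chap.~13]{serre}, the number of rational conjugacy classes of $G$ equals the number of Galois orbits on irreducible characters, which is exactly $e+1$. Combined with the previous paragraph, $\overline{Q}$ has at most $e+1$ distinct rows; Proposition \ref{Prop:BannaiMuzychuk}(ii) supplies the reverse inequality, so equality holds. This confirms Property $\PropM{\rats}$ via the Bannai-Muzychuk criterion, and simultaneously shows that the merging on the $A$-side is precisely by rational conjugacy classes, identifying the basis relations of the fusion as the $R_\cC$. The main obstacle is pinning down the duality between Galois orbits on idempotents and rational conjugacy classes on relations: everything hinges on the character-theoretic identity $\bm{\sigma}(\chi_j(g)) = \chi_j(g^s)$ and Serre's counting theorem, but once these are in hand the Bannai-Muzychuk criterion delivers both conclusions at once.
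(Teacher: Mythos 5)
Your proposal is correct, and its first half coincides with the paper's argument: both rest on the identity $\bm{\sigma}(\chi_j(g)) = \chi_j(g^s)$, obtained by powering the root-of-unity eigenvalues of $\rho_j(g)$, to conclude that the rows of $\overline{Q}=QO$ are constant on rational conjugacy classes, and both then feed this into the Bannai--Muzychuk criterion. Where you diverge is in closing the count. You get the reverse inequality by citing Serre's theorem that the number of rational conjugacy classes equals the number of Galois orbits on irreducible characters; the paper instead argues directly that the adjacency matrix $A=\sum_{\cC_i\subseteq\cC}A_i$ of each relation $R_\cC$ has only rational eigenvalues (its eigenvalue on $E_j$ is a Galois-invariant sum of character values over $\cC$) and hence lies in $\rats[\cE_\rats]$, so the span of these Schur idempotents --- of dimension equal to the number of rational classes --- sits inside the $(e+1)$-dimensional space $\rats[\cE_\rats]$. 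Both routes are legitimate: the paper's is self-contained modulo Section 3 and exhibits the fusion's adjacency matrices explicitly, while yours outsources the count to a classical fact that the paper itself records as well known just before the theorem statement. One small point to tidy: rational conjugacy only gives $g_{i'}$ conjugate to $g_i^{s}$ with $s$ coprime to the \emph{order} of $g_i$, so to realise this as an automorphism $\zeta_n\mapsto\zeta_n^{s}$ you should first adjust $s$ modulo $|g_i|$ (by the Chinese remainder theorem) to make it coprime to the exponent $n$; the paper elides the same step.
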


\begin{proof}
An idempotent $\sum_{j=0}^d c_jE_j$ belongs to $\cE_\rats$ if only if  the corresponding character $\psi(g) = \sum_{j=0}^d c_j \chi_j(g)$ is rational for each $g\in G$.

As shown above, the splitting field $\FF$ is contained in a cyclotomic extension $\rats(\zeta_n)$ and, for each $m$ with 
$\gcd(m,n)=1$, there is a Galois automorphism 
$\bm{\sigma}\in \Gal(\rats(\zeta_n)/\rats)$ with $\zeta_n^{\bm{\sigma}}=\zeta_n^m$.

Now suppose $\langle x\rangle = \langle y\rangle$. Then $y=x^m$ for some $m$ with $\gcd(m,n)=1$. If $\psi$ is a rational character of $G$, say  $\psi(g) = \tr \rho(g)$ for some complex representation $\rho:G \rightarrow \GL_f(\cx)$, then  $\psi(x) = \tr \rho(x) = \sum_{j=1}^f \omega_j$
for some  $n^{\rm th}$ roots of unity $\omega_1,\ldots,\omega_f$. We then have 
$$ \psi(y) = \tr \, \rho\left( x^m \right) = \tr \, \rho\left( x\right)^m =  \sum_{j=1}^f \omega_j^m =\sum_{j=1}^f \omega_j^{\bm{\sigma}} =\left( 
\sum_{j=1}^f \omega_j  \right)^{\bm{\sigma}} = \psi(x)^{\bm{\sigma}} = \psi(x)$$
since $\psi$ is rational. Now if $y$ is instead conjugate to some $x^m$ with $\langle x^m\rangle = \langle x \rangle$, then  $\psi(y) = \psi\left(x^m\right)=\psi(x)$.
This tells us that the adjacency matrix of the relation $R_{\cC}$,  $A=\displaystyle{\sum_{\cC_i\subseteq \cC}} A_i$, satisfies $A\circ E= \alpha_E A$ (some $\alpha_E$) for each $E\in \cE_\rats$ so $A$ belongs to the Schur closure of $\cx[\cE_\rats]$. Equivalently, collapsing over 
orbits of $\Gal(\FF/\rats)$, we find that rows $i$ and $j$ of $\overline{Q} =QO$ are equal whenever $\cC_i$ and $\cC_j$ are contained in the same rational conjugacy class.
So the number $e+1$ of Galois orbits $\cQ_j$ is at most the number of rational conjugacy classes. Going in the other direction, the adjacency matrix $A$ of any relation $R_\cC$ belongs to the full Bose-Mesner algebra and has only rational eigenvalues, so belongs to $\rats[\cE_\rats]$.  
\end{proof}

\begin{example}
    The ``gcd scheme'' for $\ints_n$ described in Example \ref{Ex:Cn} is precisely the rational fusion of the conjugacy class scheme of $\ints_n$.
\end{example}

\begin{example}
The alternating group $A_4$ has character table and eigenmatrices
$$ \sT=\left[ \begin{array}{crcc}  
1 & 1 & 1 & 1 \\
1 & 1 & \zeta\  & \zeta^2  \\
1 & 1 & \zeta^2 & \zeta\   \\
3 &\!\!-1 & 0 & 0  \end{array} \right], \quad
Q =\left[ \begin{array}{cccr}  
1 & 1 & 1 & 9 \\
1 & 1 & 1 & \!\!-3 \\
1 & \zeta & \zeta^2 & 0 \\
1 & \zeta^2 & \zeta & 0 \end{array} \right], \quad
P =\left[ \begin{array}{crcc}  
1 & 3 & 4 & 4 \\
1 & 3 & 4 \zeta & 4\zeta^2 \\
1 & 3 & 4 \zeta^2 & 4\zeta \\
1 &\!\!-1 & 0 & 0 \end{array} \right]$$
where $\zeta$ is a primitive cube root of unity. We fuse the conjugate pair of eigenspaces
to find $\bar{Q} = \left[ \begin{array}{crr}  
1 & 2 & 9 \\
1 & 2 & \!\!-3 \\
1 &\!\!-1& 0 \end{array} \right]$, 
$\bar{P} = \left[ \begin{array}{crr}  
1 & 3 & 8 \\
1 & 3 & \!\!-4 \\
1 &\!\!-1& 0 \end{array} \right]$; the Galois fusion is the association scheme of the complete multipartite graph $\overline{3K_4}$.
\end{example}

Observe that, as a consequence, Theorems \ref{Thm:inclusion} and \ref{Thm:BijectiveDesignCollections} are always applicable for conjugacy class schemes.

\begin{problem}
    Does there exist a finite group $G$ whose rational fusion is a primitive 2-class association scheme? Note that some graphs with Paley parameters, e.g. $\mathsf{Paley}(q^2)$ but also $\mathrm{srg}(225,112,55,56)$, have rational eigenvalues.
\end{problem}

\begin{problem}
    Suppose $\EE$ and $\KK$ are subfields of the splitting field of the conjugacy class scheme $(G,\cR)$. Is there a simple relationship between $\EE$  and $\KK$ such that, whenever $(G,\cR)$ satisfies Property $\PropM{\EE}$, it must also satisfy Property $\PropM{\KK}$? 
\end{problem}

\section{Case study: Dicyclic groups}
\label{Sec:dicyclic}

The \emph{dicyclic group} $\Dic_n$ is the group of order $4n$ with presentation
$$ \Dic_n = \left\langle \ x,y \ \middle| \ x^{2n}=1, \ y^2=x^n, \ y^{-1}xy=x^{-1} \ \right\rangle.$$
For $n$ even, this group has the same character table as the dihedral group $D_{2n}$ of the same order. In fact, the two association schemes are isomorphic. Since the dihedral case is studied elsewhere, we will focus on the case where $n$ is odd.

Multiplication and conjugation (using $y^{-1}=yx^n$)  are given by 
\begin{center}
\hfill  \begin{tabular}{l|ll}
$g\cdot h$  &   $\phantom{y}x^\ell$  &   $yx^\ell$ \\ \hline
$\phantom{y}x^k$  &  $\phantom{y}x^{k+\ell}$  &  $yx^{\ell-k}$  \\
${y}x^k$  &  ${y}x^{k+\ell}$  &  $\phantom{y}x^{n+\ell-k}$  
\end{tabular} \hfill
\begin{tabular}{l|ll}
$h^g$  &   $\phantom{y}x^\ell$  &   $yx^\ell$ \\ \hline
$\phantom{y}x^k$  &  $\phantom{y}x^{\ell}$  &  $yx^{\ell+2k}$  \\
${y}x^k$  &  $\phantom{y}x^{-\ell}$  &  ${y}x^{2k-\ell}$  
\end{tabular}. \hfill  $\phantom{.}$
\end{center}

The  $n+2$ non-trivial conjugacy classes are
$\cC_k=\{x^k,x^{2n-k} \}$,  $1\le k< n$, $\cC_n=\{x^n\}$, 
$$\cC_{n+1} = \{ yx^{2k} : k=0,\ldots,n-1 \}, \quad  \cC_{n+2} = \{ yx^{2k+1} : k=0,\ldots,n-1 \}$$
where $\cC_{n+2} = \cC_{n+1}^{(-1)}$ since $n$ is assumed odd.

The conjugacy class association scheme contains two vertex-disjoint subschemes (or induced schemes) isomorphic to the association scheme of the $2n$-gon: one
induced on the cyclic subgroup $\langle x\rangle$ and the other on $y\langle x\rangle$. In both the association scheme of the dihedral group $D_{2n}$ 
and this association scheme, the remaining pairs are partitioned into two relations $R_{n+1}\cup R_{n+2} = \left( \langle x\rangle \times y\langle x\rangle\right) \cup  \left( y\langle x\rangle \times \langle x\rangle\right)$. Whereas, in the dihedral case,  these two relations are both isomorphic to two copies of the complete bipartite graph $K_{n,n}$,  in the dicyclic case the relations $R_{n+1}$ and
$R_{n+2}$ are orientations of $K_{2n,2n}$. More precisely, pairs in $R_{n+1}$ incident to $x^k \in \Dic_n$ are $(x^k,yx^{k+2\ell})$ and $(yx^{n-k+2\ell},x^k)$ ($0\le \ell < n$).

The (transpose of the) eigenmatrix for the association scheme of the cycle $C_{2n}$ appears as a submatrix of our
character table. The splitting field of $C_{2n}$ is well-known to be the real part of a cyclotomic extension, $\rats(\zeta_{2n}) \cap \re$, and we extend this by the square root of $-1$ to
get our splitting field $\FF = \left( \rats(\zeta_{2n}) \cap \re \right) (i) = \rats(\zeta_{2n}+\zeta_{2n}^{-1}, i)$.

\begin{center}   
\begin{tikzpicture}[style=thick,solidvert/.style={   draw,  circle,   fill=black,  inner sep=1.2pt}]
\def \eps {0.25}
\draw[gray,dashed] (1+\eps,3-\eps) -- (2-\eps,2+\eps);
\draw (3+\eps,3-\eps) -- (4-\eps,2+\eps);
\draw (2+\eps,2-\eps) -- (3-\eps,1+\eps);
\draw[gray,dashed] (2+\eps,4-\eps) -- (3-\eps,3+\eps); 
\draw (3+\eps,1+\eps) -- (4-\eps,2-\eps);
\draw (2+\eps,2+\eps) -- (3-\eps,3-\eps);
\draw[gray,dashed] (1+\eps,3+\eps) -- (2-\eps,4-\eps); 
\node at (3,1) {$\mathbb{Q}$};
\node at (2,2) {$\mathbb{Q}(\zeta_{2n}+\zeta_{2n}^{-1})$};
\node at (4,2) {$\mathbb{Q}(i)$};
\node at (1,3) {$\mathbb{Q}(\zeta_{2n})$};
\node at (3,3) {$\mathbb{F}$};
\node at (2,4) {$\mathbb{Q}\left(\zeta_{4n}\right)$};
\end{tikzpicture}
\end{center}

The cyclotomic extension  $\rats(\zeta_m)$ has Galois group $\left( \ints_m\right)^\times$  \cite[Sec.~14.6]{df}; the Galois automorphism corresponding to $k\in \ints_m$ with $\gcd(k,m)=1$ is given by $\zeta_m\mapsto \zeta_m^k$. (See also \cite{conrad} for a nice exposition.) We have Galois group $\Gal(\FF/\rats)$ 
where our splitting field
$\FF$ is a subfield of $\rats(\zeta_{4n})$ whose Galois group is $\left( \ints_{4n}\right)^\times$. Since all subgroups are normal, all intermediate fields $\rats \subset \KK \subseteq \rats(\zeta_{4n})$
are Galois extensions.  So $[\FF:\rats]=[\rats(\zeta_{2n}):\rats]= \phi(2n)$, the Euler totient function. Since $n$ is assumed to be odd, $\FF$ is a degree $\phi(n)$ extension of the rationals.
By \cite[Prop.\ 14.19]{df},  $\Gal(\FF/\rats(i))\cong \Gal( \rats(\zeta_{2n}+\zeta_{2n}^{-1})/\rats)$, the Galois group of the splitting field of the $2n$-cycle,  
and  $\Gal(\FF/\rats(\zeta_{2n}+\zeta_{2n}^{-1})) \cong \ints_2$.  Moreover, writing $\zeta=\zeta_{4n}$ now, the subgroup of $\Gal\left( \rats(\zeta)/\rats \right)$ that stabilises $\FF$ is $N=\langle \zeta \mapsto \zeta^{2n-1} \rangle$ since $k=1$ and $k=2n-1$ are the only exponents with $\zeta^{nk}=\zeta^n$ and $\zeta^{2k}+\zeta^{-2k}=\zeta^2+\zeta^{-2}$. We now have $\Gal(\FF/\rats) \cong 
\left( \ints_{4n}\right)^\times/ \langle 2n-1\rangle$. So, for $1\le k\le 4n-1$ with $\gcd(k,4n)=1$, $\zeta\mapsto \zeta^k$ and $\zeta \mapsto \zeta^{2n-k}$ act identically on $\FF$ and we need only consider $0<k<n$ and $3n<k<4n$ or $-n<k<n$ with $\gcd(k,2n)=1$. 

With this notation for elements of $\Gal(\FF/\rats)$, let's see how $\bm{\theta}_k:\zeta \mapsto \zeta^k$ acts on the columns of matrix $Q$, i.e., on the rows of the character table. 
The character table for $\Dic_n$ takes the following general form.  Let $\kappa^{(r)} = \zeta^{2r}+\zeta^{-2r}=2\cos\left( \frac{\pi r}{n} \right)$ to save space.
\begin{center}
\begin{tabular}{c|ccccccc|rr}
$\cC$ & $\cC_0$ & $\cC_1$ & $\cC_2$ & $\cdots$ & $\cC_k$ & $\cdots$ & $\cC_n$ & $\cC_{n+1}$ & $\cC_{n+2}$ \\ 
$|\cC|$ & $1$ & $2$ & $2$ & $\cdots$ & $2$ & $\cdots$ & $1$ & $n$ & $n$ \\ \hline
$\chi_{0}$ & $1$ & $1$ & $1$ & $\cdots$ & $1$ & $\cdots$ & $1$ & $1$ & $1$ \\
$\chi_{1}$ & $1$ & $1$ & $1$ & $\cdots$ & $1$ & $\cdots$ & $1$ & $-1$ & $-1$ \\
$\chi_{2}$ & $1$ & $\!\!\!\!-1$ & $1$ & $\cdots$ & $(-1)^k$ & $\cdots$ & $(-1)^n$ & $i$ & $-i$ \\
$\chi_{3}$ & $1$ & $\!\!\!\!-1$ & $1$ & $\cdots$ & $(-1)^k$ & $\cdots$ & $(-1)^n$ & $-i$ & $i$ \\ \hline
$\psi_{1}$ & $2$ & $\kappa^{(1)}$ & $\kappa^{(2)}$ & $\cdots$ & $\kappa^{(k)}$ & $\cdots$ & $\!\!\!\!-2$ & $0$ & $0$ \\
$\psi_{2}$ & $2$ & $\kappa^{(2)}$ & $\kappa^{(4)}$ & $\cdots$ & $\kappa^{(2k)}$ & $\cdots$ & $2$ & $0$ & $0$ \\
$\vdots$ & $\vdots$ & $\vdots$ & $\vdots$ & $\ddots$ & $\vdots$ & $\ddots$ & $\vdots$ & $\vdots$ & $\vdots$ \\
$\psi_{n-1}$ & $2$ & $\kappa^{(n-1)}$ & $\kappa^{(2n-2)}$ & $\cdots$ & $\kappa^{(nk-k)}$ & $\cdots$ & $(-1)^{n-1}2$ & $0$ & $0$ \\
\end{tabular}
\end{center}

We compute $\bm{\theta}_k\left( \kappa^{(r)}\right) = \bm{\theta}_k \left( \zeta^{2r}+\zeta^{-2r}\right)=  \zeta^{2kr}+\zeta^{-2kr}= \kappa^{(kr)}$ when $kr<n$. 
As this suggests,  $\bm{\theta}_k$ and $\bm{\theta}_{-k}$  have the same action on $\kappa^{(1)},\ldots,\kappa^{(n-1)}$, 
but one fixes $\pm i$ while the other swaps them, namely the one among $\pm k$ that is three 
modulo four. 

If, for integers $k,m$ with $m>1$,  we write $k \mmod m = k'$ for that unique $k' \equiv k \pmod{m}$ in the range $-\frac{m}{2} < k' \le \frac{m}{2}$, then we have
$$ \bm{\theta}_k : \psi_\ell \mapsto \psi_{\ell^{\theta_k}} = \psi_{|k\ell \mmod 2n|}~.$$
So each permutation $\theta_k \in \Sigma_\FF$ 
defined by $E_\ell^{\bm{\theta}_k} = E_{\ell^{\theta_k}}$ fixes $0$ and $1$, fixes  (resp., swaps) 
$2$ and $3$ if $k\equiv 1 \pmod{4}$ (resp., $k\equiv 3 \pmod{4}$), and for $E_{j+3}$ corresponding to $\psi_{j}$,  $\theta_k(j+3)= |kj \mmod 2n|+3$.

We can simplify things significantly if we merely wish to work out the orbits of  $\Sigma_\rats$ 
on $\{0,1,\ldots,d\}$. We have orbits $\cQ_0=\{0\}$, $\cQ_1=\{1\}$, $\cQ_2=\{2,3\}$ and the remaining orbits are precisely the orbits of the Galois group of the $2n$-cycle acting on $\{ \psi_1,\ldots, \psi_{n-1}\}$ as the transpose of the eigenmatrix of this (self-dual) scheme appears among the first $n+1$ columns of $\sT$ after deleting repeated rows.

So our rational decomposition tool gives the following result in this case. 

\begin{corollary}
\label{Cor:clumpsDicyclic}
Let $(\Dic_n,\cR)$ be the conjugacy class scheme of the dicyclic group $\Dic_n$ with primitive idempotents indexed $\{E_{\chi_0},
E_{\chi_1},E_{\chi_2},E_{\chi_3},E_{\psi_1},\ldots,E_{\psi_{n-1}}\}$ as above. Let $C\subset \Dic_n$ be a Delsarte $T$-design for $T=T(C)$. 
Then $\chi_2\in T$ if and only if $\chi_3\in T$ and, for $1\le r,k<n$ with $\gcd(k,n)=1$, $\psi_\ell \in T$ if and only if $\psi_{|k\ell \mmod 2n|}\in T$ for each $-n<k<n$ with $\gcd(k,2n)=1$. \hfill $\Box$
\end{corollary}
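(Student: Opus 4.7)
The plan is a direct appeal to Theorem \ref{Thm:rational}(i): since $T(C)$ must be a union of orbits of the permutation group $\Sigma_\rats$ acting on $\{0,1,\ldots,n+2\}$, both assertions reduce to reading off these orbits from the computation of the Galois action carried out in the paragraphs immediately preceding the corollary. The actual work has essentially been done there; the corollary just repackages it.

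First, I would recall from that discussion that, for each integer $k$ with $\gcd(k,2n)=1$ and $-n<k<n$, the Galois automorphism $\bm{\theta}_k\colon \zeta \mapsto \zeta^k$ induces a permutation $\theta_k \in \Sigma_\rats$ which fixes the indices of $\chi_0$ and $\chi_1$, which fixes $\{\chi_2,\chi_3\}$ pointwise if $k\equiv 1\pmod 4$ and swaps the two if $k\equiv 3\pmod 4$, and which sends the index of $\psi_\ell$ to that of $\psi_{|k\ell\mmod 2n|}$. Since every $\theta_k$ arises from a genuine element of $\Gal(\FF/\rats)$, the orbits of $\Sigma_\rats$ are precisely those generated by these maps.

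For the first assertion, I would simply exhibit a single $k$ with $\gcd(k,2n)=1$ and $k\equiv 3\pmod 4$; the choice $k=-1$ works (it lies in the required range, is coprime to $2n$, and satisfies $-1\equiv 3\pmod 4$), and in fact corresponds to complex conjugation acting on the conjugate pair $\chi_2,\chi_3$. This places $\chi_2$ and $\chi_3$ in a common $\Sigma_\rats$-orbit, so Theorem \ref{Thm:rational}(i) gives $\chi_2\in T$ iff $\chi_3\in T$. For the second assertion, the orbit of the index of $\psi_\ell$ is by construction exactly $\{|k\ell\mmod 2n| : -n<k<n,\ \gcd(k,2n)=1\}$, and another application of Theorem \ref{Thm:rational}(i) yields the claimed equivalence.

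There is no real obstacle, since the splitting-field analysis, the identification of $\Gal(\FF/\rats)\cong (\ints_{4n})^\times/\langle 2n-1\rangle$, and the explicit formula $\theta_k(\psi_\ell)=\psi_{|k\ell\mmod 2n|}$ have all been established above; the corollary is a direct translation of those facts via Theorem \ref{Thm:rational}(i). The only point deserving a line of verification is that the range $-n<k<n$ with $\gcd(k,2n)=1$ is a complete set of representatives for $\Gal(\FF/\rats)$, which follows from the quotient by $\langle 2n-1\rangle$ identifying $k$ with $2n-k$ and the fact that $n$ is odd.
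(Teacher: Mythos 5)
Your proposal is correct and matches the paper's intended argument exactly: the paper proves this corollary implicitly by the sentence ``So our rational decomposition tool gives the following result in this case,'' i.e., by combining Theorem \ref{Thm:rational}(i) with the orbit computation ($\cQ_0=\{0\}$, $\cQ_1=\{1\}$, $\cQ_2=\{2,3\}$, and the $\psi_\ell$-orbits under $\ell\mapsto|k\ell\mmod 2n|$) carried out in the preceding paragraphs. Your explicit choice of $k=-1$ to witness the swap of $\chi_2,\chi_3$ and your remark on the representatives $-n<k<n$ are just slightly more detailed renderings of the same steps.
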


Note that, since $\gcd( \ell^{\theta_k},2n)$ is divisible by $\gcd(\ell,2n)$, the set $\psi_\ell, \psi_{2\ell},\psi_{3\ell},\ldots$ is a union of Galois orbits.


Every subgroup of $\Dic_n$ is either a cyclic or dicyclic. Since we are restricting to the case where $n$ is odd, $k$ is even if and only if $2n/k$ is odd. For $H=\langle x^k\rangle$, we have $|H|= 2n/k$ and $H \unlhd \Dic_n$ with inner distribution  of the form
 $a = [1,0,\ldots,0,2,0,\ldots,0,2,0,\ldots \ldots, 0,0]$
where $a_i=2$ if $k|i$ and $i<n$, $a_n=1$ if $k$ is odd and $a_n=0$ if $k$ is even. (The remaining entries of $a$ are zero.)  The dual distribution $b=aQ$ has entries naturally indexed by $\chi_0,\chi_1,\chi_2,\chi_3,\psi_1,\ldots,\psi_{n-1}$ with values
\begin{center}
    \begin{tabular}{|c|cc|cc|c|l|} \hline
    $\ell=2n/k$ & $\chi_0$,&$\chi_1$ & $\chi_2$,&$\chi_3$ & $\psi_{\ell i}$ ($1\le i \le \lfloor (k-1)/2 \rfloor$) &  $\psi_j$ (otherwise) \\ \hline
    $|H|=\ell$ even & $\ell$ & $\ell$  & $0$ & $0$  &   $4\ell$ & 0 \\
    $|H|=\ell$ \ odd & $\ell$ & $\ell$ & $\ell$  & $\ell$  &   $4\ell$ & 0 \\    \hline
    \end{tabular}
\end{center}

So the idempotents outside $T(H)$ are either $\{E_{\chi_0},
E_{\chi_1},E_{\psi_\ell},E_{\psi_{2\ell}},E_{\psi_{3\ell}},\ldots\}$  or $\{E_{\chi_0},
E_{\chi_1}$, $E_{\chi_2}$ ,$E_{\chi_3},E_{\psi_\ell},E_{\psi_{2\ell}},E_{\psi_{3\ell}},\ldots\}$.

The case where $H$ is dicyclic is almost identical. Here the cyclic subgroup $H \cap \langle x\rangle$ behaves as above and $H \setminus \langle x\rangle$ contributes either $a_{d-1}=|H|/2=2n/k$
when $k$ is even or 
$a_{d-1}=a_d=|H|/4=n/k$ when $k$ is odd. In the first case, the first few entries of the MacWilliams transform are
$aQ = \left[ \begin{array}{ccccc}
\frac{4n}{k} & 0 & 0 & 0 & \cdots
\end{array} \right]$ and, in the second case, $aQ = \left[ \begin{array}{ccccc}
\frac{4n}{k} & 0 & \frac{4n}{k} & \frac{4n}{k} & \cdots
\end{array} \right]$; in both cases, the entries of $aQ$ indexed by $\psi_1,\ldots,\psi_{n-1}$ are exactly as in the table above for the cyclic subgroups.

\section*{Acknowledgments}
WJM is supported through a grant 
from the National Science Foundation (DMS Award \#1808376) which is gratefully acknowledged. The major part of this work was carried out while WJM was visiting 
the University of Canterbury generously supported by an Erskine Fellowship; he thanks the School of Mathematics and Statistics, University of Canterbury, for their hospitality during this stay.
The second author thanks Sycamore Herlihy for helpful conversations.

\bigskip

\noindent \textsl{Note added in proof.} In the final days of editing, the authors learned of the preprint \cite{GZ} of Godsil and Zhang which also addresses fusions of group schemes in the context of continuous time quantum walks on Cayley graphs. Whereas the present paper considers the span of the idempotents in $\BMA \cap \Mat_X(\KK)$, the approach in \cite{GZ} considers the matrices in $\BMA \cap \Mat_X(\KK)$ whose eigenvalues also lie in $\KK$.

\end{document}